\newcommand{\chapter}{\section}
\newcommand{\Mbar}{\overline{M}} 
\newcommand{\Mdel}{\partial M}
\newcommand{\Minf}{M(\infty)}
\DeclareMathOperator{\grad}{grad}
\DeclareMathOperator{\Rc}{Rc}
\DeclareMathOperator{\Rm}{Rm}
\newcommand{\bR}{\mathbb{R}}
\newcommand{\bN}{\mathbb{N}}
\newcommand{\bS}{\mathbb{S}}
\newcommand{\bT}{\mathbb{T}}
\newcommand{\bO}{\mathbb{O}}
\newcommand{\bU}{\mathbb{U}}
\newcommand{\cA}{ {\mathcal A} }
\newcommand{\cK}{ {\mathcal K} }
\newcommand{\gbar}{\overline{g}}
\newcommand{\rhob}{\overline{\rho}}
\newcommand{\Gambar}{\overline{\Gamma}}
\newcommand{\Gamtil}{\widetilde{\Gamma}}
\newcommand{\gtil}{\widetilde{g}}
\newcommand{\grd}{\mathring{g}}
\newcommand{\grdi}{ (\mathring{g}_i) }
\newcommand{\pd}[2]{\frac{\partial #1}{\partial #2}}
\newtheorem{theorem}{Theorem}
\newtheorem{cor}[theorem]{Corollary}
\newtheorem{lemma}[theorem]{Lemma}
\numberwithin{equation}{section}
\begin{document}
\title[AH metrics]{Intrinsic characterization for Lipschitz asymptotically hyperbolic metrics}
\author[Eric Bahuaud]{Eric Bahuaud}
\date{\today}
\keywords{Asymptotically hyperbolic metrics, conformally compact metrics, regularity of the geodesic compactification}                                   
\subjclass{53C21}
\address{Current address: \newline
Institut de Math\'ematiques et de Mod\'elisation de Montpellier \newline
UMR 5149 CNRS - Universit\'e Montpellier II \newline
Case Courrier 051 - Place Eug\`ene Bataillon \newline
34095 Montpellier, France \newline }
\email{ebahuaud@math.univ-montp2.fr}
\begin{abstract}
Conformally compact asymptotically hyperbolic metrics have been intensively studied.  The goal of this note is to understand what intrinsic conditions on a complete Riemannian manifold $(M,g)$ will ensure that $g$ is AH in this sense.  We use the geodesic compactification by asymptotic geodesic rays to compactify $M$ and appropriate curvature decay conditions to study the regularity of the conformal compactification.  
\end{abstract}
\maketitle

\section{Introduction}
\label{section:introduction}
Conformally compact metrics provide a good model of asymptotically hyperbolic (AH) geometry.  These metrics have been intensively studied for over 20 years and are important in Riemannian and conformal geometry, and in physics.  See \cite{Anderson} or \cite{Lee} and the extensive bibliographies therein for more details.  The definition of these metrics however is given extrinsically.  In this note we study to what extent intrinsic conditions can determine an AH metric.  We begin by reviewing the usual setting.

Suppose $(M,g)$ is a noncompact Riemannian $(n+1)$-manifold that is the interior of a compact manifold with boundary $\Mbar$.  For $k \in \bN_0, \alpha \in [0,1]$, the metric $g$ is \textit{$C^{k,\alpha}$ conformally compact} if there exists a defining function $\rho$ for the boundary such that $\gbar = \rho^2 g$ extends to be a $C^{k,\alpha}$ metric on $\Mbar$.  Such a metric induces a conformal class on the boundary $\partial M$, called the \textit{conformal infinity} of $g$.

Straightforward calculations yield that if $g$ is at least $C^2$ conformally compact then the sectional curvatures in $M$ satisfy
\begin{equation} \label{AHdecay} 
\sec = -|d\rho|^2_{\gbar} + O(\rho). 
\end{equation}
If $|d\rho|^2_{\gbar} = 1$ on $\Mdel$, then the sectional curvatures of $M$ approach $-1$ near $\Mdel$.  This justifies the following definition.  The metric $g$ is \textit{asymptotically hyperbolic} if $g$ is conformally compact and $|d\rho|^2_{\gbar} = 1$ on $\Mdel$.  The classical setting typically requires at least a $C^2$ conformal compactification.  As any two defining functions for $\Mdel$ differ by a multiplication by a positive function, this definition is easily seen to be independent of $\rho$.  When $g$ is additionally an Einstein metric, i.e. $\Rc g = - n g$, the sectional curvatures of $g$ satisfy an improved decay estimate, i.e.
\begin{equation} \label{AHEdecay} 
\sec = -1 + O(\rho^2). 
\end{equation}

The natural question that lingers in one's mind is to what extent conformally compact AH metrics can be characterized intrinsically.  We present intrinsic conditions on a complete Riemannian manifold $(M,g)$ that will ensure that $g$ is at least Lipschitz conformally compact AH (see Theorem \ref{thm:B} below).  We use the geodesic compactification (we relegate definitions to the next section) by asymptotic geodesic rays to compactify $M$, and appropriate curvature decay conditions to study the regularity of the conformal compactification.  In a different direction there are several rigidity results for AH metrics assuming faster curvature decay than we allow; see \cite{ShiTian} and the references therein for more details.

Our main result is
\begin{theorem} \label{thm:B} Suppose $(M,g)$ is a complete noncompact Riemannian manifold and $K$ is an essential subset (see pg. \pageref{pg:es}).  Let $r(x) = dist_g(x, K)$.  Assume further that
\begin{equation} \label{convexity} \tag{NSC}
\sec( \overline{ M \backslash K } ) < 0, \text{and}
\end{equation}
\begin{equation} \label{AH1}  \tag{AH1}
\sec( M \backslash K ) = -1 + O( e^{-r} ), \text{ and }
\end{equation} 
\begin{equation} \label{AH2}  \tag{AH2+}
|\nabla_g \Rm|_g = O( e^{-\omega r} ), \text{for some} \; \omega > 1.
\end{equation}
Then $\Mbar = M \cup M(\infty)$ is a topological manifold with boundary endowed with a $C^{1,1}$ structure independent of $K$.  Further $\gbar := e^{-2r} g$ extends to a $C^{0,1}$ metric on $\Mbar$, i.e. $g$ is $C^{0,1}$ conformally compact.
\end{theorem}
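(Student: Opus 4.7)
The strategy is to use $K$ to set up Fermi-type coordinates on $\MlK$ via the outward normal exponential map, analyze the Riccati equation for the level sets of $r$ to extract the asymptotic structure of $g$, and then check directly that $\gbar = e^{-2r} g$ extends Lipschitz to the geodesic compactification, with a $C^{1,1}$ boundary structure independent of $K$.

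Because $K$ is essential and $\sec < 0$ on $\overline{\MlK}$, the outward normal exponential map $E : N^+ \partial K \to \MlK$ should be a diffeomorphism (by a Hadamard/Jacobi-field argument built into the notion of an essential subset). In the resulting coordinates $(r,\theta) \in [0,\infty) \times \partial K$ one has $g = dr^2 + h(r)$ where $h(r)$ is a one-parameter family of metrics on $\partial K$; if $S(r)$ denotes the shape operator of $\{r = \mathrm{const}\}$, then $h'(r)(X,Y) = 2 h(r)(S(r) X, Y)$ and $S(r)$ satisfies the Riccati equation
\[ S'(r) + S(r)^2 + R(r) = 0, \qquad R(r) X = \Rm(X, \partial_r) \partial_r. \]
Hypothesis (AH1) gives $R(r) = -I + O(e^{-r})$, so an asymptotic comparison with the hyperbolic model $S \equiv I$ (in the spirit of Heintze--Im Hof/Eschenburg) yields $S(r) = I + O(e^{-r})$. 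Integrating, $e^{-2r} h(r)$ converges uniformly on $\partial K$ to a limit metric $h_\infty$ with error $O(e^{-r})$.

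Next, set $\rho = e^{-r}$, so that
\[ \gbar = e^{-2r}\big(dr^2 + h(r)\big) = d\rho^2 + \rho^2\, h(-\log \rho), \]
and the tangential block $\rho^2 h(-\log\rho)$ extends continuously to $h_\infty$ at $\rho = 0$ by the previous step. For $\partial_\rho$-regularity, using $h' = 2 h S$ one computes
\[ \partial_\rho\big(\rho^2 h(-\log \rho)\big) = 2\rho\, h(-\log\rho)\big(I - S(-\log \rho)\big), \]
which is $O(\rho \cdot \rho^{-2} \cdot \rho) = O(1)$, the required Lipschitz bound in the radial direction. The tangential Lipschitz bound (in $\theta$) is not available from (AH1) alone: one differentiates the Riccati equation in $\theta$, and the resulting inhomogeneous ODE for $\partial_\theta S$ is driven by $\partial_\theta R(r)$, which is in turn controlled by $|\nabla_g \Rm|$ along parallel-transported directions. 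This is precisely where (AH2+) enters: the decay $|\nabla_g \Rm| = O(e^{-\omega r})$ with $\omega > 1$ is what makes the forcing term integrable against the $e^{2r}$ growth of $h$, yielding bounded $\theta$-derivatives of $e^{-2r} h(r)$ up to $\rho = 0$.

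Finally, the identification of $\Minf$ with equivalence classes of asymptotic rays from $\partial K$ (unique by $\sec < 0$) gives a homeomorphism $\MlK \cup \Minf \cong \partial K \times [0,1)$, so $\Mbar$ is a topological manifold with boundary, and the $(\rho,\theta)$-coordinates together with the estimates above promote it to a $C^{1,1}$ structure. For independence of $K$, one compares with the construction from a second essential $K'$: the two identifications of $\Minf$ with $\partial K$ and $\partial K'$ differ by a map of geodesic-ray spaces whose regularity up to the boundary is governed by the same Jacobi-field/Busemann estimates plus (AH2+). I expect the tangential Riccati analysis -- translating the pointwise decay of $\nabla \Rm$ into uniform decay of $\partial_\theta R(r)$ along rays -- to be the main technical obstacle, and the threshold $\omega > 1$ to be sharp for Lipschitz (rather than merely H\"older) regularity of $\gbar$.
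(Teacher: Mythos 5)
Your overall strategy is the same as the paper's: Fermi coordinates off $\partial K$, the Riccati equation with \eqref{AH1} giving $S = I + O(e^{-r})$, the radial derivative of $\gbar$ controlled by $2\rho^{-1}(I-S)\gbar$, and \eqref{AH2} entering through the $\theta$-differentiated Riccati equation. However, two steps are genuinely incomplete as stated. First, the tangential estimate: when you convert the coordinate derivative $\partial_\theta R$ into the covariant derivative $\nabla_\theta R$, the correction is not a harmless forcing term. It contains products of the form $\Gamma^{\sigma}_{\mu\alpha}\,(R+\cK)$ where $\Gamma^{\sigma}_{\mu\alpha}$ are tangential Christoffel symbols, i.e.\ first tangential derivatives of the metric --- one of the very unknowns you are trying to bound. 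So the equations for $\partial_\theta S$ and $\partial_\theta g$ form a genuinely coupled linear system, and ``the forcing term is integrable against the $e^{2r}$ growth of $h$'' is not by itself a proof: one has to show that the coupled system (after the rescaling $W = e^{2r}S$, $\gbar = e^{-2r}g$, schematically $x' \le ce^{-r}x + ce^{r}y + ce^{(2+\Omega)r}$, $y' \le ce^{-2r}x + ce^{-r}y$ with $\Omega = 1-\omega < 0$) has bounded $y$-component, which requires an asymptotic analysis of the associated second-order equations (or an equivalent argument) plus a positivity-preserving ODE comparison theorem for systems. The mechanism you name is the right one and the threshold $\omega>1$ is indeed where it closes, but the coupling is the actual technical content and is absent from your outline.

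Second, the $C^{1,1}$ structure independent of $K$ does not follow from ``Jacobi-field/Busemann estimates'' on the ray spaces; the decay hypotheses here are far too weak to control transition maps directly to second order. The paper first gets a $C^{0,1}$ structure independent of $K$ from the two-sided hyperbolic metric comparison (Theorem 17 of \cite{BahuaudMarsh}, with $a=b=1$), and then upgrades to $C^{1,1}$ by a Calabi--Hartman-type bootstrap: if a metric is Lipschitz with respect to two Lipschitz-compatible atlases, the transformation law for Christoffel symbols bounds the second derivatives of the transition functions, so the atlases are in fact $C^{1,1}$-compatible. That lemma is the missing idea in your last paragraph; without it (or a substitute) the $C^{1,1}$ claim is unsupported. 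The rest of your proposal --- the radial Lipschitz bound, the convergence of $e^{-2r}h(r)$, and the identification of $\Mbar$ with $\partial K \times [0,1)$ --- matches the paper's argument.
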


We use a result on regularity of the geometric compactification given sectional curvature bounds from \cite{BahuaudMarsh} to compactify $M$ and obtain the first estimate of manifold regularity.  All of the assumptions above certainly hold sufficiently close to the boundary of a smoothly conformally compact Einstein metric.  Of course \eqref{AH1} is just the decay of sectional curvature like \eqref{AHdecay} expressed in terms of the intrinsic distance $r$.  This assumption leads to an estimate for the Riccati equation for the shape operator of hypersurfaces of constant $r$ value, and the $r$-derivative of the metric satisfies a linear equation involving the shape operator in certain coordinates.  In order to have Lipschitz control of the compactified metric $\gbar$ we will require control of the second coordinate derivatives of $g$; this means we require some control on the derivatives of the Riccati equation.  Unfortunately the assuming the rate of decay of $|\nabla_g Rm|$ expected from a generic smoothly conformally compact AH metric is not enough to guarantee a Lipschitz conformal compactification.  Assumption \eqref{AH2} provides the required control.  

We remark \label{remark:curv} that assumption \eqref{AH2} implies that sectional curvature enjoys the same rate of decay, i.e. in fact $\sec( M \backslash K ) = -1 + O( e^{-\omega r} )$.  This is easily seen by integrating the components of $R+\cK$ (here $\cK$ is the constant curvature tensor) with respect to a parallel frame along normal geodesics emanating from $K$.  This argument works initially assuming only $\sec( M \backslash K ) = -1 + o(1)$, and consequently Theorem 1 holds with this kind of sectional curvature decay.

The author and Romain Gicquaud plan a more detailed study of how curvature decay hypotheses can be used to prove regularity for conformal compactifications.  In particular we are able to considerably strengthen the results presented here when $g$ is an Einstein metric.

This note is organized as follows.  In Section \ref{section:background}, we recall the geodesic compactification and fix notation.  In Section \ref{section:lipmfld}, we prove that $\Mbar$ is a $C^{0,1}$ manifold, and explain how metric estimates lead to a subsequent improvement in manifold regularity.  In Section \ref{sec:lipcc}, we prove Theorem \ref{thm:B}.  Finally in Section \ref{section:example}, we provide an example of a metric satisfying the decay of sectional curvature and the first covariant derivative of curvature like that of a generic AH metric but with no Lipschitz conformal compactification using our techniques.

This work formed part of the author's Ph.D. dissertation \cite{Bahuaud}.  It is a pleasure to thank Jack Lee for guidance and inspiration throughout.  I am also indebted to Robin Graham and Marc Herzlich for useful conversations and suggestions.  I thank Yuguang Shi for pointing out a calculation error in an early preprint of this paper.

\section{The Geodesic compactification}
\label{section:background}
 
The geodesic compactification of simply connected manifolds of negative curvature originates in the work of Eberlein and O'Neill \cite{EberleinONeill}.  Two geodesic rays $\sigma$ and $\tau$ parameterized by arc-length are called \textit{asymptotic} if $d_M(\sigma(t), \tau(t))$ remains bounded as $t \rightarrow +\infty$.  Denote the set of equivalence classes by $M(\infty)$.  Eberlein and O'Neill proved that given $p \in M$, $M(\infty)$ is in bijection with the unit sphere $S_p \subset T_p M$ by a rescaled exponential map, and that there is a natural topology and smooth structure that makes the \textit{geodesic compactification} $\Mbar := M \cup M(\infty)$, diffeomorphic to a closed ball.  In general one expects only the topological structure to be independent of $p$.

In 1985, Anderson and Schoen \cite{AndersonSchoen} proved if $(M,g)$ is a simply connected manifold with pinched negative sectional curvature like 
$$-\infty < -b^2 \leq \sec(M) \leq -a^2 < 0,$$
then $M(\infty)$ has a $C^{0,\alpha}$ structure, where $\alpha = a/b$.  However, they did not prove regularity for the geodesic compactification $\Mbar = M \cup M(\infty)$, and they used the strong hypothesis of simple connectivity.  

In \cite{BahuaudMarsh}, Tracey Marsh and I extended and generalized this result.  For the remainder of this note let $(M,g)$ be a smooth noncompact complete Riemannian manifold, and let $K \subset M$ be a compact embedded smooth $(n+1)$-dimensional submanifold with boundary such that $Y := \partial K$ is convex with respect to the outward pointing unit normal vector field.  We say that $K$ is an \textit{essential subset}\label{pg:es} for $M$ if additionally the outward normal exponential map, $E$, from $Y$ is a diffeomorphism $E: Y \times [0,\infty) \rightarrow \overline{M \backslash K}$.  

In \cite{BahuaudMarsh} we provided one sufficient condition for an essential subset; if $K$ is totally convex (see \cite{BahuaudMarsh} for a definition) and $\sec(M \backslash K) < 0$, then $K$ is an essential subset.  The exponential map again extends to $\Mbar$.  By declaring this map to be a diffeomorphism we obtain a topology and smooth structure on $\Mbar$ depending on $K$.  We may now study to what extent these structures depend on $K$.  Assume further curvature pinching analogous to that used by Anderson and Schoen: 
\begin{equation} \label{pinchedcurvature}
 -\infty < -b^2 \leq \sec( M \backslash K ) \leq -a^2 < 0.
\end{equation}
The main theorem from \cite{BahuaudMarsh} proved that given \eqref{pinchedcurvature}, the topology induced on $\Mbar$ is independent of $K$ and $\Mbar$ is endowed with a $C^{0,\alpha}$ structure independent of $K$, where $\alpha = a/b$.

The proofs of these results use the techniques of comparison geometry.  In the present paper we will obtain metric estimates from our curvature assumptions tailored to the AH case and appeal to Theorem 17 of \cite{BahuaudMarsh} in order to obtain a $C^{0,1}$ manifold structure for $\Mbar$ independent of $K$.  After obtaining appropriate metric estimates, we are able to prove Theorem \ref{thm:B}.

We now fix notation.  For consistency we follow the conventions established in \cite{BahuaudMarsh}.  If $X, Z$ are orthonormal vectors, the sectional curvature of the plane they span is given by $\sec(X,Z) = \Rm(X,Z,Z,X)$, where $\Rm$ is the Riemannian curvature 4-tensor.  We will denote this tensor by $R$ in the sequel.  We let $\cK$ denote the tensor of constant curvature $+1$, i.e. $\cK_{ijkl} = g_{il} g_{jk} - g_{ik} g_{jl}$.

Given that $X,Z$ are vector fields on $Y$ extended arbitrarily in $M$, we define the second fundamental form of $Y$ by $h(X,Z) = g( \nabla_X Z, -\partial_r )$, where $\nabla$ is the connection of $g$ in $M$.  We say $Y$ is strictly convex if $h$ is positive definite.  We define the shape operator as the $(1,1)$ tensor $S$ characterized by $g(X, S(Z)) = h(X,Z)$ for all $X, Z$ as above.

In what follows an inequality involving the shape operator of the form $S \geq c$  means that every eigenvalue of $S$ is greater than or equal to $c$.  Inequalities involving a metric are to be interpreted as inequalities between quadratic forms.

In light of the diffeomorphism $Y \times [0, \infty) \approx \overline{M \backslash K}$ and the fact that $r$ is the distance to $K$, we may decompose $g$ as
\[ g = dr^2 + g_Y(y,r), \]
where $g_Y$ is a one parameter family of metrics on $Y$.  Following \cite{BahuaudMarsh} we choose coordinates $\{y^{\beta}\}$ on sufficiently small open sets $W \subset Y$ and extend $\{y^{\beta}\}$ to be constant along integral curves of $\grad_g r$ to obtain \textit{Fermi coordinates} $( y^{\beta}, r) $ on $\overline{M \backslash K}$, and a decomposition of the metric as
\[ g = dr^2 + g_{\beta \nu}(y,r) dy^{\beta} dy^{\nu}. \]

We use Latin indices to index directions in $M$ and consequently these indices range from $0$ to $n$.  \label{page:greek}With the exception of the letter $\rho$, we use Greek indices to index directions along $Y$ and these range from $1$ to $n$, and $0$ to index the direction normal to $Y$.  The letter $\rho$ which next appears in Section \ref{sec:lipcc} is already well established as a defining function for the boundary.

\section{Lipschitz compactification of $\Mbar$}
\label{section:lipmfld}

In this section we obtain our first set of metric and shape operator estimates and use them to obtain a first (manifold) compactification of $M$.

Given an essential subset $K$ with boundary $Y = \partial K$, a reference covering for $Y$ is a (finite) covering $\{ W_i\}$ \label{choosingW} by small open balls in $Y$ with sufficiently small radius chosen so that $g_{\beta \nu}$ (transfered to $W$ by means of normal coordinates), the round metric $\grd$ on $\bS^n$ in normal coordinates and a flat metric on $W$ are all comparable.  In particular, on cylinders of the form $W_i \times [r_0, \infty)$, we have a comparison hyperbolic metric $dr^2 + \sinh(r) \grd_i$.

In order to prove our first compactification result, we cite Theorem 17 of \cite{BahuaudMarsh}.  We include it here for convenience.  
\begin{theorem} \label{thm:holdercomp} Let $M$ be a complete Riemannian manifold containing an essential subset.  Suppose for every essential subset $K \subset M$ with reference covering $\{W_i\}$ for $Y=\partial K$ that there exists an $R>0$ such that for every $r>R$
\begin{eqnarray*}
\frac{ \sinh^2( a( r - R ) )} {a^2} \; \grdi_{\beta \nu} \; \leq g_{\beta \nu}(y,r) \; \leq \;\frac{ \sinh^2( b( r+ R ) ) }{b^2}\; \grdi_{\beta \nu},
\end{eqnarray*}
for all i.  Then $\Mbar$ has a $C^{0,a/b}$ structure independent of $K$.
\end{theorem}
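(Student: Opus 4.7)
The plan is to build, for each essential subset $K$, a topology and differentiable structure on $\Mbar = M \cup \Minf$ by declaring the extension $\Ebar_K : Y \times [0,\infty] \to \Mbar$ of the outward normal exponential map (sending $(y,\infty)$ to the class of $\gamma_y$) to be a homeomorphism, and then to check that for any two essential subsets $K_1, K_2$ the identity map $\Mbar_{K_1} \to \Mbar_{K_2}$ is $C^{0, a/b}$. Both structures restrict to the smooth structure of $M$ in the interior, so the entire content lies at $\Minf$, where the transition is the map $\phi : Y_1 \to Y_2$ characterized by $\gamma_{y_1}$ being asymptotic to $\gamma_{\phi(y_1)}$. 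It suffices to establish a local H\"older estimate $d_{\grd'}(\phi(y_1), \phi(\tilde{y}_1)) \leq C\,d_{\grd}(y_1, \tilde{y}_1)^{a/b}$ on each reference ball in $Y_1$.

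The sandwich inequality translates into Jacobi field bounds
$\sinh(a(r-R))/a \cdot |v|_{\grd} \le |dE_{(y,r)}(v,0)|_g \le \sinh(b(r+R))/b \cdot |v|_{\grd}$
for $v \in T_yY$ with $y \in W_i$. Set $\delta := d_{\grd}(y_1, \tilde{y}_1)$ and $\delta' := d_{\grd'}(y_2, \tilde{y}_2)$, where $y_2 = \phi(y_1)$ and $\tilde{y}_2 = \phi(\tilde{y}_1)$. Integrating the upper bound along a $\grd$-geodesic from $y_1$ to $\tilde{y}_1$ produces a slice curve from $\gamma_{y_1}(r)$ to $\gamma_{\tilde{y}_1}(r)$ of $g$-length at most $\delta \sinh(b(r+R))/b$; the asymptotic relation together with compactness of $W_i$ yields a uniform constant $D$ with $d_M(\gamma_{y_i}(r), \gamma_{\phi(y_i)}(r)) \le D$ for large $r$, so
\[ d_M(\gamma_{y_2}(r), \gamma_{\tilde{y}_2}(r)) \le \frac{\delta \sinh(b(r+R))}{b} + 2D. \]
For the reverse inequality, working in Fermi coordinates relative to $K_2$, any curve $c(s) = (y(s), r(s))$ in $\overline{M \setminus K_2}$ from $\gamma_{y_2}(r)$ to $\gamma_{\tilde{y}_2}(r)$ of length $L$ satisfies $r(s) \ge r - L/2$, since $r$ is $1$-Lipschitz along $c$ and returns to its starting value. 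Applying the lower Jacobi bound pointwise at depth $r(s)$ gives $L \ge C \delta' \sinh(a(r - L/2 - R))/a$. The competing upper bound forces $L = o(r)$ once $\delta$ is small, so this self-improves to $L \ge C' \delta' \sinh(ar)$. Combining with the earlier upper bound and selecting $r$ so that $\delta \sinh(br) \sim 1$, i.e.\ $r \sim b^{-1}\log(1/\delta)$, one has $\sinh(ar) \sim \delta^{-a/b}$, and hence $\delta' \le C'' \delta^{a/b}$.

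The main obstacle is this conversion from the slice-length lower bound delivered by the sandwich inequality into an ambient-distance lower bound; the shallow-dip estimate $r(s) \ge r - L/2$ resolves it, relying crucially on the fact that the hypothesis is stated uniformly in $r$ (so the sandwich can be invoked pointwise along any minimizer relative to $K_2$) together with the $1$-Lipschitz nature of the distance-to-$K_2$ coordinate. Once $\phi$ is known to be locally $C^{0,a/b}$, I would promote this to $C^{0,a/b}$ compatibility of the two manifold structures on a full neighborhood of $\Minf$ by passing to the boundary-defining coordinate $t = e^{-r}$, interpolating between the boundary H\"older estimate and smooth agreement on any interior compact via the product coordinates $(y, t) \in Y \times [0,1)$, and then covering $\Minf$ by compactness of the reference coverings of $Y_1$ and $Y_2$. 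Since $K_1$ and $K_2$ were arbitrary essential subsets, $\Mbar$ carries a $C^{0, a/b}$ atlas independent of the choice of $K$.
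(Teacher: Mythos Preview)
This theorem is not proved in the present paper; it is quoted as Theorem~17 of \cite{BahuaudMarsh} and invoked without argument. There is therefore no proof here to compare your proposal against.

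For what it is worth, your outline is the Anderson--Schoen strategy that \cite{BahuaudMarsh} adapts to the essential-subset setting: Jacobi bounds from the metric sandwich, an upper distance estimate via slice curves plus the bounded-distance property of asymptotic rays, a lower distance estimate via the shallow-dip bound $r(s)\ge r-L/2$ on competitor curves, and optimization at $r\sim b^{-1}\log(1/\delta)$ to extract the exponent $a/b$. Two places in your sketch are loose. First, the uniform constant $D$ in $d_M(\gamma_{y_1}(r),\gamma_{\phi(y_1)}(r))\le D$ over $W_i$ does not follow from the bare definition of asymptotic rays; one typically uses that under nonpositive curvature the distance between asymptotic unit-speed rays is nonincreasing, and then invokes compactness of $\overline{W_i}$. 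Second, the phrase ``$L=o(r)$ once $\delta$ is small'' is not literally correct, since the upper bound $\delta\sinh(b(r+R))/b+2D$ blows up in $r$ for fixed $\delta$; the honest order of operations is to \emph{first} fix $r\sim b^{-1}\log(1/\delta)$ so that the upper bound is $O(1)$, and only then feed $L=O(1)$ into the implicit inequality $L\ge C\delta'\sinh(a(r-L/2-R))$ to conclude $\delta'\lesssim e^{-ar}\sim\delta^{a/b}$. You should also justify that the near-minimizing curve relative to $K_2$ stays in a single Fermi cylinder $W_j'\times[0,\infty)$, so that the projection to $Y_2$ genuinely has $\grd'$-length at least $\delta'$.
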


Consequently the focus of this section is to prove a metric comparison estimate on cylinders $W \times [r_0, \infty)$, where $W \subset Y$ is a sufficiently small ball. 

We begin with a lemma.  Note that in what follows we use calculus for Lipschitz functions; differential equations and inequalities are to be interpreted in the almost everywhere sense.

\begin{lemma} \label{lemma:decayest}Suppose that $f$ is a bounded function of $r > 0$ such that there exists a constant $J>0$ with
\[ |f(r) - 1| \leq J e^{-r}. \]
Suppose further that $\lambda$ is a positive Lipschitz solution of the Riccati equation
\begin{equation} \label{scalarriccati}
\begin{aligned}
\lambda' + \lambda^2 &= f(r), \text{and } \nonumber \\
\lambda(0) & > 0.
\end{aligned}
\end{equation}
Then there exists a positive constant $C = C(J, \lambda(0))$ such that
\[ | \lambda - 1 | \leq C e^{-r}, \]
for all $r$ sufficiently large.
\end{lemma}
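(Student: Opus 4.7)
The plan is a three-step bootstrap on $\lambda$: (i) establish a priori bounds $c \leq \lambda \leq M$ with $c > 0$ for $r$ sufficiently large, (ii) linearize the Riccati equation around $\lambda = 1$ by writing $u := \lambda - 1$, and (iii) apply an integrating-factor estimate that exploits the coefficient $\lambda + 1 \geq 1 + c$ to extract the sharp exponential rate from the $Je^{-r}$ inhomogeneity.

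For (i): since $f \leq 1 + J$, the equation $\lambda' = f - \lambda^2$ forces $\lambda' < 0$ whenever $\lambda > \sqrt{1 + J}$, so $\lambda(r) \leq M := \max\{\lambda(0), \sqrt{1+J}\}$ for all $r \geq 0$. Choose $r_0$ large enough that $f \geq 3/4$ on $[r_0, \infty)$, which is possible since $|f-1| \leq J e^{-r}$. Then $\lambda' \geq 1/2$ whenever $\lambda \leq 1/2$ and $r \geq r_0$; hence the line $\lambda = 1/2$ is a one-way barrier that $\lambda$ cannot cross downward, and $\lambda \geq 1/2$ on $[r_0, \infty)$ after possibly enlarging $r_0$.

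For (ii) and (iii): the identity $\lambda^2 - 1 = u(\lambda + 1)$ rewrites the Riccati equation as the linear first-order ODE
\[ u' + (\lambda + 1)\, u = f - 1, \]
with coefficient $\lambda + 1 \geq 3/2$ and inhomogeneity of size $\leq J e^{-r}$ on $[r_0, \infty)$. Define $\mu(r) := \exp\bigl(\int_{r_0}^r (\lambda + 1)\, ds\bigr)$, so that $(u\mu)' = (f-1)\mu$ almost everywhere. Integrating from $r_0$ to $r$ and using $\mu(s)/\mu(r) \leq e^{-(3/2)(r-s)}$ gives
\[ |u(r)| \;\leq\; |u(r_0)|\, e^{-(3/2)(r - r_0)} \;+\; J \int_{r_0}^r e^{-s}\, e^{-(3/2)(r-s)}\, ds. \]
The integral evaluates to $2\bigl(e^{-r} - e^{-(3/2)r + r_0/2}\bigr)$, so for $r$ large we obtain $|u(r)| \leq C e^{-r}$, with $C$ depending only on $J$ and $\lambda(0)$ (through $r_0$ and $u(r_0)$).

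The main subtlety is step (i): because $\lambda$ is only assumed Lipschitz, the Riccati equation holds a.e., so the barrier argument at $\lambda = 1/2$ must be phrased as a one-sided comparison between the absolutely continuous function $\lambda$ and the constant $1/2$. This is routine via the fundamental theorem of calculus but deserves a bit of care; the remaining integrating-factor computation is entirely standard.
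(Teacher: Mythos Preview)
Your argument is correct and takes a genuinely different route from the paper's.  The paper first derives a preliminary decay $|\lambda - 1| \leq c e^{-r/2}$ by estimating $(v^2)' + 2v^2$ (in the style of Shi--Tian), uses this to secure $\lambda > 1/2$ for large $r$, and then runs two separate comparison arguments with the explicit test functions $1 \pm K e^{-r}$ as super- and subsolutions.  You bypass the preliminary $e^{-r/2}$ step entirely: your one-sided barrier at $\lambda = 1/2$ gives the lower bound directly, and then the single integrating-factor estimate on $u' + (\lambda+1)u = f-1$ handles both signs of $u$ at once, with the gap $\lambda+1 \geq 3/2 > 1$ doing the work of extracting the sharp $e^{-r}$ rate.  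Your route is shorter and more unified; the paper's test-function comparison is perhaps more adaptable to variant decay profiles, and its intermediate $e^{-r/2}$ estimate connects to the literature, but for this lemma your approach is the cleaner one.  Two small points worth recording explicitly in a write-up: the ``linear'' ODE for $u$ has coefficient $\lambda+1 = 2+u$, so it is not literally linear, but the integrating-factor identity $(u\mu)' = (f-1)\mu$ holds regardless; and the dependence of $C$ on $\lambda(0)$ enters only through $M$ and hence through $|u(r_0)|$, as you note.
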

\begin{proof}
It is easy to argue from the Riccati equation that $\lambda$ is bounded above by some constant: 
\[ \lambda \leq c_1. \]

The next refinement follows the idea of \cite[Lemma 2.3]{ShiTian}.

Set $v = \lambda - 1$.  Then $-1 < v < \lambda$ and
\begin{align*}
(v^2)' + 2v^2 & \leq (v^2)' + (4 + 2v) v^2 \\
& = 2v v' + (4+2v) v^2 \\
&= 2v( \lambda' + \lambda^2 - 1)\\
&\leq c_2 e^{-r},
\end{align*}
where $c_2$ is a constant depending on $c_1$ and $J$.  Note that the last inequality uses the lower bound for $f$ in case that both $v$ and the expression involving $\lambda$ are negative.  Consequently
\[ (v^2 e^{2r})' \leq c_2 e^r, \]
and upon integrating we find that 
\[ v^2 e^{2r} \leq c_2 e^r + c_3, \]
and so
\begin{equation} \label{eqn:shivdecay}
 |v| \leq c e^{-r/2}. 
\end{equation}

We now consider a further refinement for $\lambda$.  
Choose a constant $K > \max\{J, \lambda(0)\}$ and consider the test function $u = 1 + K e^{-r}$.  Then
\[ u' + u^2 = 1 + K e^{-r} + K^2 e^{-2r}.\]
Define $F(r, z) = 1 + K e^{-r} + K^2 e^{-2r} - z^2$, and observe that $F$ is continuous.  Now
\[ \lambda' \leq 1 + J e^{-r} - \lambda^2 \leq F(r, \lambda).\]
We now compare $\lambda$ and $u$.  Note that in our eventual application we will only need to consider the case where $\lambda - u > 0$.  Observe that in this case
\begin{align} \label{Flip}
F(r, \lambda) - F(r, u) &= u^2 - \lambda^2 \nonumber \\
&= (u+\lambda)(u-\lambda) \\
&= -(u+\lambda)(\lambda-u) \nonumber \\  
&\leq -(\lambda - u), \nonumber
\end{align}
because $ 0 < \lambda < c_1$ and $1 \leq u \leq 1+K$ implies $-(u+\lambda) < -1$.

By construction of $K$, $\lambda(0) < u(0)$.  We now claim $\lambda(r) \leq u(r) = 1 + K e^{-r}$.  The rest of the argument follows as in the proof of Theorem 1.11.7 of \cite{BirkoffRota}; we repeat it here.  If by way of contradiction there is some $T>0$ where $\lambda(T) > u(T)$, then set
\[ r^* = \sup \{ 0 < r < T: \lambda(r) \leq u(r) \}. \]
By continuity, $\lambda(r^*) = u(r^*)$, and $\lambda-u > 0$ on $(r^*,T]$.  Estimate \eqref{Flip} applies and on this interval
\[ (\lambda - u)' \leq F(r, \lambda) - F(r, u) \leq - (\lambda-u). \]
This inequality easily implies $\lambda - u \leq 0$ on $(r^*, T]$, a contradiction.  We conclude that
\[ \lambda \leq u \leq 1 + K e^{-r}.\]

We now consider the refinement for the lower bound for $\lambda$.  By estimate \eqref{eqn:shivdecay} above, choose $r_0$ so large that $\lambda > 1/2$ for $r \geq r_0$.

Consider the test function $u = 1 - K e^{-r}$, for some $K$ to be determined.  Observe that
\[ u' + u^2 = 1 - Ke^{-r} + K^2 e^{-2r} = 1 - K e^{-r} u. \]
Set $F(r, z) = 1 - Ke^{-r} z - z^2$.  We have
\[ \lambda' + \lambda^2 \geq 1 - J e^{-r}.\]
and we would like to determine $K$ so that $\lambda' \geq F(r, \lambda)$.  We compute
\[ \lambda' \geq 1 - J e^{-r} - \lambda^2 \geq 1 - 2 J \lambda e^{-r} - \lambda^2, \]
at least for $r \geq r_0$.  We find that $\lambda' \geq F(r, \lambda)$ is implied by the condition that $K \geq 2J$.  Finally we have
\[ u(r_0) = 1 - K e^{-r_{0}}, \]
so we choose $K$ so large that $K \geq 2J$ and $u(r_0) < 1/2 < \lambda(r_0)$.  We also have a Lipschitz estimate
\begin{align*}
F(r, \lambda) - F(r, u) &= u^2 - \lambda^2 + K e^{-r}( u- \lambda) \\
&= (u+\lambda + 2K e^{-r}) (u - \lambda) \\
&\geq c(K) (u - \lambda),
\end{align*}
when $u > \lambda$.

We claim $\lambda \geq u$ on $[r_0, \infty)$.  Following the same type of argument as before, if by way of contradiction there is a $T$ where $u(T) > \lambda(T)$ we again restrict to an interval $(r^*, T]$ where $u > \lambda$.  But then
\[ \lambda' - u' \geq F(r,\lambda) - F(r,u) \geq -c(K) (\lambda-u), \]
which again implies $\lambda \geq u$ on $[r^*, T]$, a contradiction.  We conclude $\lambda > 1 - Ke^{-r}$ for $r$ sufficiently large.

\end{proof}

We now apply this lemma to our geometric situation to obtain improved estimates for the shape operator and metric.

\begin{theorem}[Comparison theorem]\label{thm:comparison}  Given curvature assumption \eqref{AH1} and convexity assumption \eqref{convexity}, let $(y^{\beta}, r)$ be Fermi coordinates for $Y$ on $W \times [0,\infty)$ for an open set $W \subset Y$.  Let $\Lambda, \lambda$ denote the maximum and minimum eigenvalues of the shape operator over $W$, and let $\Omega, \omega$ denote the maximum and minimum eigenvalue of the metric over $W$ (taken with respect to the background euclidean metric).  There exist positive constants $C, L_1$ and $L_2$ depending on these eigenvalues such that for $r$ sufficiently large we have
\flushleft\textbf{ Shape operator estimate: } 
\begin{eqnarray*}
\label{eqn:shapeinequality}
 (1 - C e^{- r})  \; \delta^{\beta}_{\gamma} \; \leq \; S^{\beta}_{\gamma}(y, r) \; \leq \; (1 + C e^{-r}) \; \delta^{\beta}_{\gamma},
\end{eqnarray*}
\textbf{ Metric estimate: }
\begin{eqnarray*}
L_1 e^{2r} \; \delta_{\beta \nu} \leq g_{\beta \nu}(y, r) \leq L_2 \; e^{2r} \; \delta_{\beta \nu}.
\end{eqnarray*}
\label{lemma:metriccomp}
\end{theorem}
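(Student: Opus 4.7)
The strategy is to derive scalar Riccati equations for the extremal eigenvalues of the shape operator $S$, apply Lemma \ref{lemma:decayest} to obtain the shape operator estimate, and then integrate the resulting first-order ODE for $g_{\beta\nu}$ to get the metric estimate.

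Along each unit-speed normal geodesic emanating from $Y$, $S$ satisfies the matrix Riccati equation $\nabla_{\partial_r} S + S^2 = -R_{\partial_r}$, where $R_{\partial_r}$ is the tidal force operator on the tangent space to $\{r = \mathrm{const}\}$ whose eigenvalues are the sectional curvatures $\sec(v, \partial_r)$ for unit $v \perp \partial_r$. Assumption \eqref{AH1} therefore gives $-\langle R_{\partial_r} v, v\rangle = 1 + O(e^{-r})$ for any such $v$; convexity of $Y$ supplies the initial positivity $S(0) > 0$, and \eqref{convexity} keeps $S$ positive-definite along the geodesic by standard Riccati comparison. Let $\lambda(r)$ and $\Lambda(r)$ be the pointwise minimum and maximum eigenvalues of $S(y,r)$; both are positive Lipschitz functions of $r$. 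The first-variation formula for eigenvalues of smooth families of symmetric operators, applied at a point of differentiability with a unit eigenvector $v$ for $\Lambda(r)$, yields
\[ \Lambda'(r) + \Lambda(r)^2 = -\langle R_{\partial_r} v, v \rangle = 1 + f_\Lambda(r), \qquad |f_\Lambda(r)| \leq C e^{-r}, \]
and the analogous identity holds for $\lambda$. Lemma \ref{lemma:decayest}, with initial data controlled by the eigenvalues of $S$ on $Y$ (which are bounded above and below by compactness of $Y$), applied to each of $\lambda$ and $\Lambda$ gives $|\lambda - 1|, |\Lambda - 1| \leq C e^{-r}$ for $r$ large. Since the matrix inequality $S^\beta_\gamma \geq c\,\delta^\beta_\gamma$ is interpreted as a bound on eigenvalues, this is exactly the shape operator estimate.

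For the metric estimate, use the identity $\partial_r g_{\beta\nu} = 2 h_{\beta\nu} = 2 g_{\beta\alpha} S^\alpha_\nu$, valid in Fermi coordinates. The shape operator estimate then yields
\[ (2 - Ce^{-r})\, g_{\beta\nu}(y,r) \;\leq\; \partial_r g_{\beta\nu}(y,r) \;\leq\; (2 + Ce^{-r})\, g_{\beta\nu}(y,r) \]
as an inequality of symmetric bilinear forms. The same eigenvalue-comparison technique shows that the maximum and minimum eigenvalues $\Omega(r), \omega(r)$ of $g_{\beta\nu}(y,r)$ with respect to $\delta_{\beta\nu}$ satisfy $|(\log \mu)'(r) - 2| \leq C e^{-r}$ almost everywhere, for $\mu \in \{\omega, \Omega\}$. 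Because $\int_0^\infty e^{-t}\,dt$ is finite, integration yields $|\log \mu(r) - 2r - \log \mu(0)| \leq C'$ uniformly in $y \in W$; combining with the assumed bounds $\omega \leq \mu(0) \leq \Omega$ on the initial metric and the reference covering produces the required inequalities $L_1 e^{2r}\,\delta_{\beta\nu} \leq g_{\beta\nu}(y,r) \leq L_2 e^{2r}\,\delta_{\beta\nu}$.

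The main obstacle is the matrix-to-scalar reduction: the extremal eigenvalues $\lambda, \Lambda, \omega, \Omega$ are only Lipschitz in $r$, so the scalar Riccati equations and the log-derivative inequalities must be extracted from their matrix counterparts using the first-variation formula for eigenvalues of smooth families of symmetric operators, and interpreted in the almost everywhere sense permitted by Lemma \ref{lemma:decayest}. Once this reduction is justified, the application of the scalar Riccati lemma and the integration of the resulting linear ODE inequality are routine.
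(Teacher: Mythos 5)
Your proposal is correct and follows essentially the same route as the paper: reduce the matrix Riccati equation to scalar Riccati equations for the extremal eigenvalues of $S$ (a.e.\ in $r$), apply Lemma \ref{lemma:decayest} to both, and then integrate $\partial_r g_{\beta\nu} = 2 S^{\gamma}_{\beta} g_{\gamma\nu}$ via the extremal eigenvalues of $g$ relative to $\delta$. The one small point to adjust is that $Y$ is only assumed convex (not strictly), so the uniform initial positivity $S>0$ is obtained by enlarging the essential subset slightly using \eqref{convexity} and the Riccati equation, which is exactly the mechanism your Riccati-comparison remark invokes.
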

\begin{proof} We sketch the broad idea here, for a complete proof see \cite{Bahuaud}.  
Throughout the proof, primes ($'$) denote derivatives with respect to $r$.  We suppress the dependence on the tangential variable.

In Fermi coordinates the shape operator, $S(r)$, of $r$-level sets satisfies a Riccati equation:
$$ ( S'(r) + S^2(r) )_{\beta}^{\nu} = -R_N(r)_{\beta}^{\nu},$$
where $R_N$ is the normal curvature operator given by $g( R_N V, V) = \sec(V, \partial_r)$, when $V$ is a $g$-unit vector.  Let $\lambda_M(r)$ be the maximum eigenvalue of $S(r)$.  Then $\lambda_M$ is Lipschitz continuous.  In virtue of \eqref{AH1}, we have the estimate
\begin{equation} \label{eqn:lamest}
 \lambda_M' + \lambda_M^2 = 1 + O(e^{-r}),
\end{equation}
with a similar estimate holding for the minimum eigenvalue of shape, $\lambda_m(r)$ as well.

We are only assuming that $Y = \partial K$ is convex.  However analysis of the Riccati equation for the shape operator and \eqref{convexity} easily imply that by possibly modifying the essential subset, the eigenvalues of the shape operator are uniformly bounded from below by a positive constant.  Consequently we ensure $\lambda_M \geq \lambda_m > 0$.  By Lemma \ref{lemma:decayest} we conclude that $\lambda_M$ satisfies the estimate
$$ \lambda_M \leq 1 + C e^{- r},$$
for some positive constant $C$.  This implies
\[ S^{\beta}_{\gamma}(y, r) \; \leq \; (1 + C e^{-r}) \; \delta^{\beta}_{\gamma}. \]
Similar analysis for the minimum eigenvalue gives the lower bound for the shape operator for $r$ sufficiently large.  

Metric estimates follow from similar type of analysis for the equation
$$ \partial_r g_{\beta \nu}(y,r) = 2 S^{\gamma}_{\beta}(y,r) g_{\gamma \nu}(y,r). $$
\end{proof}

\begin{cor}[Hyperbolic Metric comparison]\label{cor:comparison}  Consider Fermi coordinates $(y^{\beta}, r)$ for $Y$ on $W\times [0, \infty)$.  There exists an $R$ depending on $W$ such that for every $r > R$:
\label{thm:hcomp}
\begin{eqnarray}
\sinh^2( ( r - R ) ) \; \grd_{\beta \nu} \; \leq g_{\beta \nu}(y,r) \; \leq \; \sinh^2(  r+ R ) \; \grd_{\beta \nu}.
\end{eqnarray}
\end{cor}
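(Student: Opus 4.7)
The plan is to derive the hyperbolic sandwich estimate from the exponential metric estimate in Theorem \ref{thm:comparison} by absorbing the multiplicative constants into a translation $R$ of the argument of $\sinh$. The only extra input beyond Theorem \ref{thm:comparison} is the comparability between the round metric $\grd$ and the background euclidean metric $\delta$ built into the definition of a reference covering.

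First I would record constants. Since $W$ is an element of a reference covering, by the very construction on page \pageref{choosingW} there are positive constants $c_1, c_2$ with
\[
c_1 \, \delta_{\beta\nu} \;\leq\; \grd_{\beta\nu} \;\leq\; c_2\, \delta_{\beta\nu}
\qquad \text{on } W.
\]
Combining this with the metric estimate $L_1 e^{2r}\delta_{\beta\nu} \leq g_{\beta\nu}(y,r) \leq L_2 e^{2r}\delta_{\beta\nu}$ from Theorem \ref{thm:comparison}, I obtain the cleaner two-sided bound
\[
\frac{L_1}{c_2}\, e^{2r}\, \grd_{\beta\nu} \;\leq\; g_{\beta\nu}(y,r) \;\leq\; \frac{L_2}{c_1}\, e^{2r}\, \grd_{\beta\nu},
\]
valid for all $r$ larger than some $r_0$ depending on $W$.

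Next I would rephrase the factor $e^{2r}$ in terms of $\sinh^2$. Using $\sinh(x) = \tfrac{1}{2}(e^x - e^{-x})$, for any $R\geq 0$ and $r$ large one has
\[
\frac{e^{2R}}{4}\,\bigl(1-e^{-2(r+R)}\bigr)^2\, e^{2r}
\;=\; \sinh^2(r+R)
\;\geq\; \frac{e^{2R}}{8}\, e^{2r},
\]
once $r$ is large enough, and similarly $\sinh^2(r-R)\leq \tfrac{e^{-2R}}{4}\, e^{2r}$ for $r>R$. So picking $R$ large enough that
\[
\frac{e^{2R}}{8} \;\geq\; \frac{L_2}{c_1}
\qquad \text{and} \qquad
\frac{e^{-2R}}{4} \;\leq\; \frac{L_1}{c_2},
\]
and enlarging $R$ further if necessary so that the lower asymptotic $\bigl(1-e^{-2(r+R)}\bigr)^2 \geq \tfrac{1}{2}$ holds on $[R,\infty)$, the previous inequality chain upgrades to
\[
\sinh^2(r-R)\, \grd_{\beta\nu} \;\leq\; g_{\beta\nu}(y,r) \;\leq\; \sinh^2(r+R)\, \grd_{\beta\nu}
\]
for every $r>R$, which is the desired estimate.

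I do not anticipate any serious obstacle: everything reduces to the elementary asymptotics of $\sinh$ and a single choice of $R$ large enough to swallow the constants $L_1, L_2, c_1, c_2$. The mild subtlety is bookkeeping, since two distinct ``large $r$'' thresholds appear (one from Theorem \ref{thm:comparison}, one from the $\sinh$ asymptotics) and must be combined into a single $R$ that simultaneously serves as the translation in the $\sinh$ arguments and as the lower bound of the $r$-interval on which the estimate holds; this is exactly why the statement couples these two roles into the one parameter $R$.
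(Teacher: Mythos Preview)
Your argument is correct and follows the same route as the paper: absorb the multiplicative constants from Theorem~\ref{thm:comparison} into a shift $R$ of the argument of $\sinh$, using only elementary asymptotics. The paper's proof is the same idea stated in one line (choose $R$ so that $L_2 e^{2r}\leq \sinh^2(r+R)$ and $\sinh^2(r-R)\leq L_1 e^{2r}$ for $r>R$); you have simply made explicit the passage from $\delta_{\beta\nu}$ to $\grd_{\beta\nu}$ via the reference-covering comparability, which the paper leaves implicit.
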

\begin{proof}
Observe that it is possible to choose $R$ so large that $L_2 e^{2r} \leq \sinh^2(r + R)$ and $\sinh^2(r-R) \leq L_1 e^{2r}$, for $r > R$.  
\end{proof}

We now come to the main result of this section.

\begin{theorem} \label{thm:lipmlfd} Let $(M^{n+1},g)$ be a complete noncompact Riemannian manifold.  Suppose that there exists an essential subset $K$ for $M$ and that curvature assumptions \eqref{convexity} and \eqref{AH1} are satisfied.  Then $\Mbar := M \cup \Minf$ is endowed with the structure of a $C^{0,1}$ manifold with boundary independent of the choice of $K$.
\end{theorem}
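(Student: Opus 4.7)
The plan is to feed the metric estimates already established in this section into Theorem \ref{thm:holdercomp} with the sharp choice $a=b=1$. Concretely, given an essential subset $K$ with $Y=\partial K$, choose a reference covering $\{W_i\}$ of $Y$ by small coordinate balls as described on page \pageref{choosingW}, so that on each cylinder $W_i\times[0,\infty)$ we have Fermi coordinates $(y^\beta,r)$ and the round metric $\grd_i$ is comparable to the euclidean metric used in Theorem \ref{thm:comparison}.

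On each such cylinder, assumptions \eqref{convexity} and \eqref{AH1} let me invoke Theorem \ref{thm:comparison} to get the shape operator estimate and the two-sided bound $L_1 e^{2r}\delta_{\beta\nu}\le g_{\beta\nu}(y,r)\le L_2 e^{2r}\delta_{\beta\nu}$. Corollary \ref{cor:comparison} then converts this into the hyperbolic comparison
\begin{equation*}
\sinh^2(r-R_i)\,(\grd_i)_{\beta\nu}\;\le\;g_{\beta\nu}(y,r)\;\le\;\sinh^2(r+R_i)\,(\grd_i)_{\beta\nu}
\end{equation*}
on $W_i\times[R_i,\infty)$. Since the reference covering is finite, I can take $R=\max_i R_i$ so the inequalities hold uniformly in $i$ for $r>R$. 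This is exactly the hypothesis of Theorem \ref{thm:holdercomp} with $a=b=1$, so that theorem immediately yields a $C^{0,1}$ structure on $\Mbar$ independent of the essential subset $K$.

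The one point that needs a little care is the remark embedded in the proof of Theorem \ref{thm:comparison}: Lemma \ref{lemma:decayest} requires $\lambda(0)>0$, i.e.\ that the eigenvalues of the shape operator of the hypersurface where we start the Riccati analysis are uniformly bounded below by a positive constant. The original $K$ is only assumed convex, so some eigenvalues of $S$ on $Y$ could be zero. The standard fix, which I would spell out here, is to replace $K$ by the slightly larger essential subset $K_{r_0}=\{r\le r_0\}$ for a small $r_0>0$; the strict negativity of sectional curvature on $\overline{\MlK}$ from \eqref{convexity} forces the eigenvalues of the shape operator to become strictly positive after flowing outward for any positive time, so on $\{r=r_0\}$ we get the needed uniform positive lower bound. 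The resulting compactification is independent of this enlargement by the independence-of-$K$ statement in Theorem \ref{thm:holdercomp}.

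The main obstacle, if any, is bookkeeping rather than conceptual: one must verify that the constants $C$, $L_1$, $L_2$, and hence $R_i$, produced by Theorem \ref{thm:comparison} are controlled uniformly on each $W_i$ so that after passing to the finite maximum $R$ one still gets a single $R$ for which the hypothesis of Theorem \ref{thm:holdercomp} holds. This is automatic because the eigenvalue bounds on $S|_Y$ and on $g|_Y$ are continuous and $Y$ is compact, so the estimates of Theorem \ref{thm:comparison} are uniform on $Y$. With that in place the conclusion is immediate.
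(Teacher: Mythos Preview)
Your argument is correct and is essentially the same as the paper's: invoke Corollary \ref{cor:comparison} on each element of a reference covering and then apply Theorem \ref{thm:holdercomp} with $a=b=1$. The additional points you spell out (taking a finite maximum of the $R_i$, enlarging $K$ to $K_{r_0}$ to force strict positivity of the shape-operator eigenvalues, and uniformity of the constants over compact $Y$) are exactly the bookkeeping the paper leaves implicit.
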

\begin{proof}
Given a reference covering for $Y$, Corollary \ref{cor:comparison} indicates that $g$ is comparable to an upper and lower hyperbolic comparison metric.  Consequently we may apply Theorem \ref{thm:holdercomp} in the case $a=b=1$.
\end{proof}

\subsection{Metric estimates and regularity for $\Mbar$}
\label{sec:metricimproved}

If $\cA$ is a $C^{l,\beta}$ atlas for a manifold, the transformation formula for the metric under a change of coordinates shows that in general the metric is well-defined up to $C^{l-1, \beta}$ regularity.  We now prove a lemma that allows us to use metric regularity to improve the regularity of transition functions.  This lemma is in the spirit of \cite{CalabiHartman}; see that paper for further results along these lines.

\begin{lemma} \label{lemma:improvedregularity} Suppose $M$ is a manifold with two smooth atlases $\cA_1 = \{(U_{\alpha}, \phi_{\alpha})\}$ and $\cA_2 = \{(V_{\beta}, \psi_{\beta})\}$ that are $C^{0,1}$ compatible.  Suppose that $g$ is a metric that is $C^{0,1}$ with respect to both atlases.  Then $\cA_1$ and $\cA_2$ are $C^{1,1}$ compatible.
\end{lemma}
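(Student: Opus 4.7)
The plan is to take an arbitrary transition function $f = \psi_\beta \circ \phi_\alpha^{-1}$, which is $C^{0,1}$ and bi-Lipschitz by hypothesis, and upgrade its regularity to $C^{1,1}$. Write $x^i$ for coordinates in $\cA_1$ and $y^k$ for coordinates in $\cA_2$, and denote the components of $g$ in these two systems by $g_{ij}(x)$ and $\tilde g_{kl}(y)$. Both are Lipschitz by hypothesis, and the pullback identity
\[ g_{ij}(x) = \tilde g_{kl}(f(x)) \, \partial_i f^k(x) \, \partial_j f^l(x) \]
holds almost everywhere by Rademacher's theorem, with $\partial f,\, \partial f^{-1} \in L^\infty_{\text{loc}}$.

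Since the metric components are Lipschitz in each chart, the Christoffel symbols $\Gamma^l_{ij}$ of $g$ in $x$-coordinates and $\tilde\Gamma^p_{qr}$ of $g$ in $y$-coordinates are defined almost everywhere and lie in $L^\infty_{\text{loc}}$. The central step is to establish, as an identity of distributions, the classical Christoffel transformation law
\[ \partial_i \partial_j f^p(x) = \Gamma^l_{ij}(x) \, \partial_l f^p(x) - \tilde\Gamma^p_{qr}(f(x)) \, \partial_i f^q(x) \, \partial_j f^r(x). \]
In the smooth category this identity is obtained by differentiating the pullback formula in $x^m$, forming the cyclic combination $\partial_i g_{jm} + \partial_j g_{im} - \partial_m g_{ij}$, and contracting with $g^{pm} = (\partial f^{-1})^p_a (\partial f^{-1})^m_b \, \tilde g^{ab}(f)$; all second-derivative terms in $f$ except one cancel by the symmetry of $\tilde g_{kl}$ and of mixed partials, leaving exactly the displayed identity. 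Once this identity is available in our setting, the right-hand side is a product of $L^\infty_{\text{loc}}$ functions, so $\partial_i \partial_j f^p \in L^\infty_{\text{loc}}$ as a distribution, and the standard Sobolev identification $W^{2,\infty}_{\text{loc}} = C^{1,1}_{\text{loc}}$ finishes the proof.

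The main obstacle is justifying this derivation at $C^{0,1}$ regularity, because intermediate terms such as $\tilde g_{kl}(f)\, \partial_i f^k \cdot \partial_m\partial_j f^l$ formally pair an $L^\infty$ function with a distribution. My plan is to argue by duality: test the identity against $\varphi \in C^\infty_c$ and integrate by parts so that every derivative falls onto $\varphi$ or onto the Lipschitz metric data; in the rearranged integrals only products of bounded measurable factors appear, the chain rule for Lipschitz compositions (Rademacher/Stepanov) gives $\partial_m[\tilde g_{kl}\circ f](x) = (\partial_n \tilde g_{kl})(f(x)) \, \partial_m f^n(x)$ almost everywhere, and the classical cyclic cancellation then passes to a bona fide distributional identity. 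As a backup one can mollify $\tilde g$ (and hence the pullback identity) by $\eta_\epsilon$, run the smooth argument for the approximate system, and let $\epsilon \to 0$, tracking the $O(\epsilon)$ commutator between convolution and composition with $f$. Either route produces the transformation formula, after which $C^{1,1}$ regularity of $f$ is immediate.
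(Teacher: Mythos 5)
Your proposal is correct and follows essentially the same route as the paper: reduce to a local bi-Lipschitz change of coordinates, invoke the transformation law for Christoffel symbols to express $\partial_i\partial_j f^p$ in terms of a.e.-defined bounded quantities, and conclude $f\in W^{2,\infty}_{\mathrm{loc}}=C^{1,1}_{\mathrm{loc}}$. The only difference is that you spell out the distributional/mollification justification of the transformation identity at Lipschitz regularity, which the paper takes for granted.
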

\begin{proof}
This is a local question so we reduce to the case where $f: (U \subset \bR^{n+1}, x^i) \rightarrow (\widetilde{U} \subset \bR^{n+1}, y^i)$ is a $C^{0,1}$ diffeomorphism between open sets of $\bR^{n+1}$.  
Write 
\[ g_{ij} = g\left( \pd{}{x^i}, \pd{}{x^j} \right) \; \mbox{and} \; \gtil_{kl} = g\left( \pd{}{y^k} , \pd{}{y^l} \right). \]
As $g$ satisfies Lipschitz estimates in both systems of coordinates, Christoffel symbols are defined a.e. and bounded.  The transformation law for Christoffel symbols under a change of coordinates states
\[ \pd{^2 y^m}{x^i \partial x^j} = \pd{y^k}{x^i} \pd{y^l}{x^j} \Gamma^m_{kl} - \left( \Gamtil^l_{ij} \circ f \right) \pd{y^m}{x^l}. \]
The right hand side of this equation is bounded if $f \in C^{0,1}$ and if $g \in C^{0,1}$ with respect to both sets of coordinates.   Consequently $f$ satisfies a $C^{1,1}$ estimate.
\end{proof}

\section{Lipschitz Conformal Compactification} \label{sec:lipcc}
In this section we show that $\gbar = e^{-2r} g$ extends to a Lipschitz metric on $\Mbar$ and prove Theorem \ref{thm:B}.  Fix an essential subset $K$.  In Fermi coordinates on $W \times [0,\infty)$, where $W$ is a sufficiently small open ball (see page \pageref{choosingW}), we may write
\[ g = dr^2 + g_{\alpha \beta}(y,r) dy^{\alpha} dy^{\beta}. \]
We set $\rho := e^{-r}$, and remind the reader of the convention given on page \pageref{page:greek} that $\rho$ does not count as a tangential or Greek variable. In these `compactified Fermi coordinates', $(y^{\beta},\rho)$ over $W \times (0, 1]$, we now have
\[ g = \frac{d\rho^2}{\rho^2} + g_{\alpha \beta}(y, - \log \rho) dy^{\alpha} dy^{\beta}, \]
and consequently
\[ \gbar = d\rho^2 + \rho^2 g_{\alpha \beta}(y, - \log \rho) dy^{\alpha} dy^{\beta}. \]
In order to prove a Lipschitz estimate for $\gbar$ it suffices to prove that the first derivatives of $\gbar$ are bounded in compactified Fermi coordinates.  These derivatives are:
\begin{equation*}
\begin{aligned}
\partial_{\rho} \gbar_{\alpha \beta} &= 2 \rho g_{\alpha \beta} + \rho^2 \partial_r g_{\alpha \beta} \cdot \left(-\frac{1}{\rho}\right) \\ 
& = 2 \rho^{-1} (\delta^{\gamma}_{\alpha}-S^{\gamma}_{\alpha}) \gbar_{\gamma \beta}, \\
\partial_{\mu} \gbar_{\alpha \beta} &= \rho^2 \partial_{\mu} g_{\alpha \beta}.
\end{aligned}
\end{equation*}

As we explain below, the tangential derivatives of $\gbar$ satisfy a system of differential equations.  ODE comparison theory will then be used to obtain $L^{\infty}$ estimates for the derivatives.

A few words about how we measure the size of tensors is in order.  The metric estimate from Theorem \ref{lemma:metriccomp} is an inequality between quadratic forms.  However the polarization identity implies that each component of $g$ with respect to a Fermi coordinate basis is also of order $e^{2r}$.  Further, if we are provided with decay estimates for the eigenvalues of any $(1,1)$-tensor $T$ that is self-adjoint with respect to $g$ then we may also obtain decay estimates for each component of $T$.  We exploit these facts for both the shape operator $S$ and normal curvature $R_{0 \alpha \; 0}^{ \;\;\; \beta}$ below.

By the shape operator estimate \eqref{eqn:shapeinequality}, the $\rho$ derivative of $\gbar$ is bounded.  In order to estimate tangential derivatives of $g$ we take tangential derivatives of the Riccati and metric equations
\begin{equation} \label{eqn:tansys} 
\left\{ \begin{aligned}
 (\partial_{\mu} S_{\alpha}^{\beta})' &= -(\partial_{\mu} S^{\beta}_{\gamma}) S^{\gamma}_{\alpha} - S^{\beta}_{\gamma} (\partial_{\mu} S^{\gamma}_{\alpha}) -\partial_{\mu} R_{0 \alpha \; 0}^{\; \; \; \; \beta}, \\
 (\partial_{\mu} g_{\alpha \beta})' &= 2 (\partial_{\mu} S^{\gamma}_{\alpha}) g_{\gamma \beta} + 2 S^{\gamma}_{\alpha} (\partial_{\mu} g_{\gamma \beta}),
\end{aligned}
\right.
\end{equation}
where primes denote a coordinate derivative with respect to $r$.  In the following analysis we will regard this as a first order system of $2n^3$ linear differential equations by introducing new dependent variables.  In order to use the intrinsic decay estimate \eqref{AH2}, we must express the coordinate derivative of curvature that appears in the system \eqref{eqn:tansys} in terms of a covariant derivative.  In particular we have
\begin{lemma} \label{lemma:curdecay} Given curvature decay assumptions \eqref{AH1} and \eqref{AH2},
\begin{equation}
-\partial_{\mu} R_{0 \alpha \; 0}^{\; \; \; \; \beta} = - \Gamma^{\sigma}_{\mu \alpha} ( \delta_{\sigma}^{\beta} + R_{0 \sigma \; 0}^{\; \; \; \; \beta}) + \Gamma^{\beta}_{\mu \sigma} (\delta^{\sigma}_{\alpha} + R_{0 \alpha \; 0}^{\; \; \; \; \sigma}) + F,
\end{equation}
where $F$ is a remainder term whose components satisfy $F = O( e^{\Omega r})$, where $\Omega = 1 - \omega$.
\end{lemma}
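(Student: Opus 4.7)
The plan is to express the coordinate partial derivative $\partial_\mu R_{0\alpha 0}^{\;\;\;\;\beta}$ in terms of the covariant derivative plus Christoffel correction terms, and then check that the two Christoffel terms displayed in the lemma collect the dangerous contributions, leaving a small remainder $F$. Applying the standard covariant derivative formula for a $(1,3)$-tensor gives
\[
\partial_\mu R_{0\alpha 0}^{\;\;\;\;\beta} = \nabla_\mu R_{0\alpha 0}^{\;\;\;\;\beta} + \Gamma^\sigma_{\mu 0} R_{\sigma\alpha 0}^{\;\;\;\;\beta} + \Gamma^\sigma_{\mu\alpha} R_{0\sigma 0}^{\;\;\;\;\beta} + \Gamma^\sigma_{\mu 0} R_{0\alpha\sigma}^{\;\;\;\;\beta} - \Gamma^\beta_{\mu\sigma} R_{0\alpha 0}^{\;\;\;\;\sigma}.
\]
Negating and inserting the trivial identity $-\Gamma^\sigma_{\mu\alpha}\delta^\beta_\sigma + \Gamma^\beta_{\mu\sigma}\delta^\sigma_\alpha = 0$ puts the right-hand side in the form displayed in the lemma, with
\[
F = -\nabla_\mu R_{0\alpha 0}^{\;\;\;\;\beta} - \Gamma^\sigma_{\mu 0} R_{\sigma\alpha 0}^{\;\;\;\;\beta} - \Gamma^\sigma_{\mu 0} R_{0\alpha\sigma}^{\;\;\;\;\beta}.
\]

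It then remains to bound each piece of $F$ by $O(e^{\Omega r})$. I will use Theorem \ref{thm:comparison} (which gives $g_{\alpha\beta} \sim e^{2r}\delta_{\alpha\beta}$ in Fermi coordinates and $\Gamma^\sigma_{\mu 0} = S^\sigma_\mu = O(1)$), assumption \eqref{AH2} (which gives $|\nabla R|_g = O(e^{-\omega r})$), and the remark on page \pageref{remark:curv} (which gives $|R + \cK|_g = O(e^{-\omega r})$). The translation from these intrinsic $g$-norm estimates to coordinate component bounds is done by passing through a local orthonormal frame $\{E_i\}$ with $E_0 = \partial_r$ and tangential $E_\alpha$ of length $\sim e^{-r}\partial_\alpha$: each tangential index down contributes a factor of $e^{r}$ when moving from the $g$-norm to the coordinate basis, and raising a tangential index via $g^{-1}$ costs a factor of $e^{-2r}$. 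For example, the all-down component $\nabla_\mu R_{0\alpha 0\gamma}$ has three tangential indices, so $|\nabla_\mu R_{0\alpha 0\gamma}| = O(e^{3r}\cdot e^{-\omega r})$; raising $\gamma$ to $\beta$ yields $|\nabla_\mu R_{0\alpha 0}^{\;\;\;\;\beta}| = O(e^{(1-\omega)r}) = O(e^{\Omega r})$. The same scaling analysis applied to $R_{\sigma\alpha 0\gamma}$ and $R_{0\alpha\sigma\gamma}$ yields the same bound, using the crucial observation that in Fermi coordinates $\cK_{\sigma\alpha 0\gamma} = \cK_{0\alpha\sigma\gamma} = 0$ (since $g_{\alpha 0} = 0$), so that these components of $R$ decay at the rate of $|R + \cK|_g$. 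Combining with $\Gamma^\sigma_{\mu 0} = O(1)$ gives the claimed decay of the two Christoffel-curvature products.

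The main obstacle is precisely this bookkeeping: systematically tracking the anisotropic metric scaling in Fermi coordinates, where tangential directions grow exponentially in $r$ while the normal direction has unit length, so that intrinsic decay estimates translate to coordinate components with the correct powers of $e^r$. Once this is carried out carefully the bound on $F$ is automatic, and the hypothesis $\omega > 1$ ensures $\Omega = 1 - \omega < 0$ so that $F$ is genuinely a decaying remainder.
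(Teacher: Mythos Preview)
Your proposal is correct and follows essentially the same route as the paper: you write the covariant-derivative formula for the $(1,3)$-tensor, insert the vanishing combination $-\Gamma^{\sigma}_{\mu\alpha}\delta^{\beta}_{\sigma}+\Gamma^{\beta}_{\mu\sigma}\delta^{\sigma}_{\alpha}=0$ to group the two displayed Christoffel terms, and then absorb $\nabla_{\mu}R_{0\alpha 0}^{\;\;\;\;\beta}$ together with the two $\Gamma^{\sigma}_{\mu 0}$ contributions into $F$, bounding each using $|\nabla R|_g=O(e^{-\omega r})$, $|R+\cK|_g=O(e^{-\omega r})$, $\Gamma^{\sigma}_{\mu 0}=\pm S^{\sigma}_{\mu}=O(1)$, and the vanishing of $\cK$ on components with a single normal index. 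Your orthonormal-frame bookkeeping for converting $g$-norm decay into coordinate-component decay is exactly the mechanism the paper invokes more tersely at the end of its proof via $|\partial_0|_g=1$ and $|\partial_{\mu}|_g=O(e^{r})$.
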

\begin{proof}
Take the $\mu$-covariant derivative of the curvature tensor $R_{0 \alpha \; 0}^{\; \; \; \; \beta}$ to obtain
\begin{equation} \label{eqn:covrm}
\nabla_{\mu} R_{0 \alpha \; 0}^{\; \; \; \beta} = \partial_{\mu} R_{0 \alpha \; 0}^{\; \; \; \beta} - \Gamma^{\sigma}_{\mu 0} R_{\sigma \alpha \; 0}^{\; \; \; \; \beta}  - \Gamma^{\sigma}_{\mu \alpha} R_{0 \sigma \; 0}^{\; \; \; \; \beta} + \Gamma^{\beta}_{\mu \sigma} R_{0 \alpha \; 0}^{\; \; \; \; \sigma} - \Gamma^{\sigma}_{\mu 0} R_{0 \alpha \; \sigma}^{\; \; \; \; \beta}.
\end{equation}
We begin by considering the second and fifth terms above.  In Fermi coordinates $-\Gamma^{\sigma}_{\mu 0} = S^{\sigma}_{\mu}$.  Further, by the remark on page \pageref{remark:curv}, we have $|R + \cK|_g = O(e^{- \omega r})$ (recall $\cK$ is the constant curvature 4-tensor).  Since $R_{\sigma \alpha \; 0}^{\; \; \; \; \beta}$ contains one $0$ index it is straightforward to verify that
\begin{equation} \label{eqn:covrm1}
|\Gamma^{\sigma}_{\mu 0} R_{\sigma \alpha \; 0}^{\; \; \; \; \beta} + \Gamma^{\sigma}_{\mu 0} R_{0 \alpha \; \sigma}^{\; \; \; \; \beta} | = O(e^{(1-\omega)r}).
\end{equation}
Set $F = \Gamma^{\sigma}_{\mu 0} R_{\sigma \alpha \; 0}^{\; \; \; \; \beta} + \Gamma^{\sigma}_{\mu 0} R_{0 \alpha \; \sigma}^{\; \; \; \; \beta} $.
We now consider the third and fourth terms.  Observe
\begin{equation} \label{eqn:covrm2}
-\Gamma^{\sigma}_{\mu \alpha} R_{0 \sigma \; 0}^{\; \; \; \; \beta} + \Gamma^{\beta}_{\mu \sigma} R_{0 \alpha \; 0}^{\; \; \; \; \sigma} = - \Gamma^{\sigma}_{\mu \alpha} ( R_{0 \sigma \; 0}^{\; \; \; \; \beta} + \delta_{\sigma}^{\beta} ) + \Gamma^{\beta}_{\mu \sigma} (R_{0 \alpha \; 0}^{\; \; \; \; \sigma} + \delta^{\sigma}_{\alpha}).
\end{equation}
Solving \eqref{eqn:covrm} for the coordinate derivative of curvature and applying estimates \eqref{AH2}, \eqref{eqn:covrm1} and equation \eqref{eqn:covrm2} we obtain
\begin{equation*}
-\partial_{\mu} R_{0 \alpha \; 0}^{\; \; \; \; \beta} = -  \Gamma^{\sigma}_{\mu \alpha} ( R_{0 \sigma \; 0}^{\; \; \; \; \beta} + \delta_{\sigma}^{\beta} ) + \Gamma^{\beta}_{\mu \sigma} (R_{0 \alpha \; 0}^{\; \; \; \; \sigma} + \delta^{\sigma}_{\alpha}) + F,
\end{equation*}
where we absorb the covariant derivative term into $F$.  Note the components of $F$ are $O(e^{ (1-\omega) r})$ by \eqref{AH2}, as we consider the $\mu$-covariant derivative $\nabla_{\mu} R_{0 \alpha \; 0}^{\; \; \; \beta} dx^{\alpha} \otimes \partial_{\beta}= (\nabla R)(\partial_0, \cdot, \cdot, \partial_0, \partial_{\mu})$ as  a $(1,1)$-tensor and $|\partial_0|_g = 1$, $|\partial_{\mu}|_g = O(e^r)$, by Theorem \ref{thm:comparison}.
\end{proof}
\label{page:decayremark}

We also make a change of variables.  Set $W = e^{2r} S$ and recall $\gbar = e^{-2r} g$.  Note that for Greek indices, $\Gambar^{\gamma}_{\alpha \beta} = \Gamma^{\gamma}_{\alpha \beta}$.  Using this change of variables and Lemma \ref{lemma:curdecay}, system \eqref{eqn:tansys} becomes
\begin{equation} \label{eqn:tansys2} 
\left\{ \begin{aligned}
 (\partial_{\mu} W^{\beta}_{\alpha})' & = - (\partial_{\mu} W^{\beta}_{\gamma}) S^{\gamma}_{\alpha} - S^{\beta}_{\gamma} (\partial_{\mu} W^{\gamma}_{\alpha}) + 2 \partial_{\mu} W^{\beta}_{\alpha} \\  & - e^{2r} \Gambar^{\sigma}_{\mu \alpha} ( R_{0 \sigma \; 0}^{\; \; \; \; \beta} +  \delta_{\sigma}^{\beta}) + e^{2r} \Gambar^{\beta}_{\mu \sigma} (R_{0 \alpha \; 0}^{\; \; \; \; \sigma} + \delta^{\sigma}_{\alpha}) + e^{2r} F_{\alpha \;}^{\; \beta}, \\
(\partial_{\mu} \gbar_{\alpha \beta})' & = 2 e^{-2r} (\partial_{\mu} W^{\gamma}_{\alpha}) \gbar_{\gamma \beta} + 2 S^{\gamma}_{\alpha} (\partial_{\mu} \gbar_{\gamma \beta}) - 2 (\partial_{\mu} \gbar_{\alpha \beta}).
\end{aligned}
\right.
\end{equation}
Regard the components $\partial_{\mu} W^{\beta}_{\alpha}$ and $\partial_{\mu} \gbar_{\alpha \beta}$ as vectors in $\bR^{n^3}$ which we denote $\partial W$ and $\partial \gbar$.  The above system may be compactly written as:
\begin{equation} \label{eqn:tansys3} 
\left\{ \begin{aligned}
(\partial W)' &= A \partial W + B \partial \gbar + G, \\
(\partial \gbar)' &= C \partial W + D \partial \gbar.
\end{aligned}
\right.
\end{equation}
where $A,B,C,D, G$ are $(n^3 \times n^3)$-matrices.  We will not need the explicit form of these matrices in what follows; we only need estimates on the size of the matrix entries.  Let $|\cdot|_2$ denote the usual euclidean norm in $\bR^{n^3}$ (we use the same symbol for the associated operator norm on matrices).  We have
\begin{lemma} \label{lemma:coefest} There exists a positive constant $c$ such that the coefficients of the system \eqref{eqn:tansys3} satisfy the following estimates
\begin{align*}
|A|_2 & \leq c e^{-r}, \\
|B|_2 & \leq c e^{r}, \\
|C|_2 & \leq c e^{-2r}, \\
|D|_2 & \leq c e^{-r}, \\
|G|_2 & \leq c e^{(2+\Omega)r}. 
\end{align*}
\end{lemma}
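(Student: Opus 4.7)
The plan is to estimate each block $A$, $B$, $C$, $D$, $G$ directly from the expressions appearing in system \eqref{eqn:tansys2}, using three ingredients: the shape operator and metric estimates of Theorem \ref{thm:comparison}, the curvature decomposition of Lemma \ref{lemma:curdecay}, and the componentwise interpretation of decay bounds for $g$-self-adjoint $(1,1)$-tensors recalled just before Lemma \ref{lemma:curdecay}, which converts eigenvalue decay into decay of Fermi-coordinate components.

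Four of the bounds should be dispatched first, as they are nearly immediate. The terms in the first equation of \eqref{eqn:tansys2} that are linear in $\partial W$ regroup schematically as $(I - S)\,\partial_\mu W + \partial_\mu W\,(I - S)$, so the shape operator estimate $S - I = O(e^{-r})$ yields $|A|_2 \leq c\, e^{-r}$. Likewise the $\partial \gbar$ terms in the second equation collapse to $2(S - I)\,\partial_\mu \gbar$, giving $|D|_2 \leq c\, e^{-r}$. For $C$ the coefficient is $2 e^{-2r}\gbar$, and $\gbar$ is uniformly bounded by the metric estimate, so $|C|_2 \leq c\, e^{-2r}$. For $G$, Lemma \ref{lemma:curdecay} already provides $F = O(e^{\Omega r})$ componentwise, and the prefactor $e^{2r}$ in \eqref{eqn:tansys2} then gives $|G|_2 \leq c\, e^{(2+\Omega) r}$ at once.

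The bound on $B$ is the substantive step and the natural obstacle. Its entries come from $-e^{2r}\Gambar^{\sigma}_{\mu\alpha}(R_{0\sigma\;0}^{\;\;\;\beta} + \delta^\beta_\sigma) + e^{2r}\Gambar^{\beta}_{\mu\sigma}(R_{0\alpha\;0}^{\;\;\;\sigma} + \delta^\sigma_\alpha)$. Recalling $\Gambar^{\gamma}_{\alpha\beta} = \Gamma^{\gamma}_{\alpha\beta}$ for Greek indices, these Christoffel symbols are linear combinations of entries of $\partial \gbar$ with bounded coefficients $\tfrac{1}{2}\gbar^{\gamma\tau}$, since both $\gbar$ and $\gbar^{-1}$ are uniformly bounded in Fermi coordinates by the metric estimate. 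The factor $R_{0\sigma\;0}^{\;\;\;\beta} + \delta^\beta_\sigma$ is the Fermi component of $R_N + I$, a $g$-self-adjoint $(1,1)$-tensor whose eigenvalues decay like $e^{-\omega r}$ by the remark on page \pageref{remark:curv}; the componentwise principle then forces the Fermi components themselves to satisfy $R_{0\sigma\;0}^{\;\;\;\beta} + \delta^\beta_\sigma = O(e^{-\omega r})$. Combining these two observations yields $|B|_2 \leq c\, e^{2r}\cdot e^{-\omega r} = c\, e^{(2-\omega)r} \leq c\, e^{r}$ for $r$ large, where the final inequality uses $\omega > 1$.

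The only subtlety worth flagging is the translation between the eigenvalue form in which the curvature and shape operator decay estimates are stated and the componentwise (in Fermi coordinates) form needed to bound the operator norms on $\bR^{n^3}$. This is handled uniformly for $S - I$ and for $R_N + I$ by the principle recalled before Lemma \ref{lemma:curdecay}, combined with the metric comparison of Theorem \ref{thm:comparison}, and once that observation is in hand every coefficient bound reduces to elementary bookkeeping.
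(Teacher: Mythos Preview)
Your proof is correct and follows the same approach as the paper: the paper's own proof only works out the $A$ case explicitly via the interpolation $-(\partial_\mu W)S - S(\partial_\mu W) + 2\partial_\mu W = -(\partial_\mu W)(S-I) - (S-I)(\partial_\mu W)$ and then says ``similarly with the other matrices,'' while you supply the analogous details for $B$, $C$, $D$, $G$. The one minor variation is that for $B$ you invoke the improved decay $R_N + I = O(e^{-\omega r})$ from the remark on page~\pageref{remark:curv} to get $|B|_2 \le c\,e^{(2-\omega)r}$ before noting $\omega > 1$; the stated bound $|B|_2 \le c\,e^{r}$ already follows from the weaker \eqref{AH1} decay $R_N + I = O(e^{-r})$, so your route is slightly sharper but not substantively different.
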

\begin{proof}
The lemma follows from the curvature assumptions and the metric and shape operator estimates of Theorem \ref{thm:comparison}.  For example to analyze the nonzero components of $A$, consider the obvious interpolation
\begin{align*} - (\partial_{\mu} W^{\beta}_{\gamma}) S^{\gamma}_{\alpha} &- S^{\beta}_{\gamma} (\partial_{\mu} W^{\gamma}_{\alpha}) + 2 \partial_{\mu} W^{\beta}_{\alpha} \\  &= - (\partial_{\mu} W^{\beta}_{\gamma}) (S^{\gamma}_{\alpha} - \delta^{\gamma}_{\alpha} ) - (S^{\beta}_{\gamma} - \delta^{\beta}_{\gamma}) (\partial_{\mu} W^{\gamma}_{\alpha}). \end{align*}
By the shape operator estimate, the eigenvalues of $S - I$ are order $e^{-r}$.  Consequently,
\[ |A|_2 \leq c e^{-r}. \]
Similarly with the other matrices.
\end{proof}

We now finish the proof of the main theorem.

\begin{proof}[Proof of Theorem \ref{thm:B}]
In order to prove that the tangential derivatives of $\gbar$ are bounded, we first compare system \eqref{eqn:tansys3} to a model system.

Set $x(r)= |\partial W|$ and $y(r) = |\partial \gbar|$.  These functions are continuous everywhere and are smooth where $\partial W$ and $\partial \gbar$ are nonzero.  Note that the system uncouples if either $x=0$ or $y=0$ on an interval, and from there it is easy to see that $y$ is bounded, which implies $\gbar$ is Lipschitz on that interval.  From the Cauchy-Schwarz inequality it follows that where they are smooth, $x'(r) \leq |(\partial W)'|$ and $y'(r) \leq |(\partial \gbar) '|$.  Consequently the system of equations \eqref{eqn:tansys3} combined with Lemma \ref{lemma:coefest} implies the estimate
\begin{equation} \label{eqn:tansys4} 
\left\{ \begin{aligned}
x' & \leq c e^{-r}x + c e^{ r} y + c e^{(2+\Omega)r}, \\
y' & \leq c e^{-2r}  x + c e^{-r} y.
\end{aligned}
\right.
\end{equation}

We thus compare solutions to this system of differential inequalities to solutions of the corresponding system of equations
\begin{equation} \label{eqn:modelsys} 
\left\{ \begin{aligned}
u' & = c e^{-r} u + c e^{r} v + c e^{(2+\Omega)r}, \\
v' & = c e^{-2r} u + c e^{-r} v.
\end{aligned}
\right.
\end{equation}

We now digress to discuss the solutions of this model system.  The system \eqref{eqn:modelsys} may be solved for $u$ and $u'$ in terms of $v$ and $v'$, obtaining
\[ u = \frac{1}{c} e^{2r} v' - e^r v, \; \text{and } \]
\[ u'= e^r v' + c (e^r - 1) v + c e^{(2+\Omega)r}. \]
This leads to a second order linear inhomogeneous differential equation for $v$,
\begin{equation}
\label{eqn:secforv}
v'' + (2 - 2ce^{-r}) v' + c e^{-r} (c e^{-r} - c - 1) v = c^2 e^{\Omega r}. 
\end{equation}
This equation has asymptotically constant coefficients.  By Theorem 1.9.1 of \cite{Eastham}, two linearly independent solutions $v_1$ and $v_2$ of the associated homogeneous equation to \eqref{eqn:secforv} satisfy asymptotic estimates
\[ v_1 = (1 + o(1) ) e^{-2r}, \; v_1' = (-2 + o(1)) e^{-2r}, \mbox{and} \]
\[ v_2 = 1 + o(1), \; v_2' = o(1).\]

We must also obtain estimates for the solution of the inhomogeneous part of \eqref{eqn:secforv}.  Set $p(r) = 2 - 2ce^{-r}$ and $q(r) = c e^{-r}(c e^{-r} - c -1 )$.  Equation \eqref{eqn:secforv} may be transformed to a first order system by letting $w =v'$:
\begin{equation*}
\begin{pmatrix} v \\ w 
\end{pmatrix}'
= \begin{pmatrix} 0 & 1 \\ -q & -p 
\end{pmatrix} 
\begin{pmatrix} v \\ w 
\end{pmatrix}
+
\begin{pmatrix} 0 \\ c^2 e^{\Omega r}
\end{pmatrix}
\end{equation*}

The fundamental matrix for this system is 
\[ \phi = \begin{pmatrix} v_1 & v_2 \\ v_1' & v_2' 
\end{pmatrix}, \]
and it is easy to verify that by `variation of parameters', a solution to the inhomogeneous system is given by
\[ \begin{pmatrix} v(r) \\ w(r) 
\end{pmatrix}_{IH} = \phi(r) \int_{r_0}^r { \phi^{-1}( s ) \begin{pmatrix} 0 \\ c e^{\Omega s} \end{pmatrix} \; ds}. \]
Note that $\det \phi = (-2 + o(1)) e^{-2r}$.  It is now straightforward to check that a solution to the inhomogeneous equation, $v_{IH}$, satisfies $v_{IH} = O(e^{\Omega r})$.

We repeat this process for $u$ and find that $u$ satisfies
\begin{equation} \label{eqn:secforu}
 u'' + (-1 - 2c e^{-r})u' + ( c(2-c) e^{-r} + c^2 e^{-2r}) u = (\Omega - 1)c e^{(2+\Omega)r}-c^2 e^{(1+\Omega)r}.
\end{equation}
Linearly independent solutions are given by
\[ u_1 = 1 + o(1), \; u_1' = o(1), \; \mbox{ and } \]
\[ u_2 = (1 + o(1)) e^r, \; u_2' = (1+o(1)) e^r. \]
A solution to the inhomogeneous equation satisfies $u_{IH} = O( e^{(2+\Omega)r} )$.

Because $\Omega < 0$, the analysis above implies that every solution of \eqref{eqn:secforv} is at worst $O(1)$, and every solution of \eqref{eqn:secforu} satisfies $u = O(e^{(2+\Omega)r})$.  This implies like estimates for the components of every solution of \eqref{eqn:modelsys}.

At a point $r_0$ where $x(r_0) > 0$ and $y(r_0) > 0$, fix initial conditions $u(r_0) > x(r_0)$ and $v(r_0) > y(r_0)$, and consider the solution to \eqref{eqn:modelsys}.  It is straightforward to argue that $u$ and $v$ must remain positive on $[r_0, \infty)$: if by way of contradiction $u = 0$ or $v= 0$ for some $r > 0$ we may compute the first such time
\[ r_u = \inf \{ r > 0: u(r) = 0 \}, \mbox{and} \]
\[ r_v = \inf \{ r > 0: v(r) = 0 \}.\]
We have that $r_u > 0$ and $u > 0$ on $[r_0, r_u)$ as well as $r_v > 0$ and $v > 0$ on $[r_0, r_v)$.  If $r_u \leq r_v$ then the fundamental theorem of calculus implies
\[ u(r) = u(r_0) + \int_{r_0}^r u'(s) ds = u(r_0) + \int_{r_0}^r c e^{-s} u + c e^{s} v + c e^{(2+\Omega)s} ds \]
for all $r \in [r_0, r_u)$.  As every term in the integrand is strictly positive on $[r_0, r_u)$ we find $u(r_u) > 0$, a contradiction.  A similar argument holds for $r_v < r_u$ using the equation for $v'$. 

Theorem \ref{thm:ODEcomp} (cf. the Appendix below) now implies that $|\partial_{\mu} W_{\alpha}^{\beta}| = x \leq u$ and $|\partial_{\mu} \gbar_{\alpha \beta}| = y \leq v$ on $[r_0, \infty)$.  

As $|\partial_{\mu} \gbar_{\alpha \beta}|=O(1)$, the first derivatives of $\gbar$ satisfy Lipschitz estimates in the interior of a compactified Fermi coordinate chart.  Therefore $\gbar$ extends to a Lipschitz continuous 2-tensor up to $\partial M$.  Further, the metric estimate from Theorem \ref{thm:comparison} implies that this extension is positive definite.  This proves that $g$ is $C^{0,1}$ conformally compact.

That $\Mbar$ is a $C^{1,1}$ manifold follows from Lemma \ref{lemma:improvedregularity}.
\end{proof}

\subsection{Appendix: Differential Equation Comparison}

The purpose of this appendix is to prove a comparison theorem for a two-dimensional system of first-order linear differential equations with positive coefficient functions.

\begin{theorem} \label{thm:ODEcomp} Suppose that $a, b, c, d$ are positive smooth functions on $[t_0, t_1]$ (respectively $[t_0, \infty)$).  Suppose that $x$ and $y$ are nonnegative continuous functions that are smooth where they are nonzero and satisfy the differential inequalities
\begin{align*}
x' & \leq a x + by + e, \\
y' & \leq c x + dy + f.
\end{align*}
Suppose in addition that $u$ and $v$ are positive smooth solutions of the corresponding system of differential equations:
\begin{align*}
u' & = a u + bv + e, \\
v' & = c u + dv + f.
\end{align*}
If $x(t_0) < u(t_0)$ and $y(t_0) < v(t_0)$ then $x < u$ and $y < v$ on  $[t_0, t_1]$ (respectively $[t_0, \infty)$).
\end{theorem}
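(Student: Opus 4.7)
The plan is a proof by contradiction. Set $X := u - x$ and $Y := v - y$; these are continuous on the interval in question and strictly positive at $t_0$. Define
\[ t^* := \inf\{ t > t_0 : X(t) = 0 \text{ or } Y(t) = 0\}. \]
By continuity and the strict initial inequalities $t^* > t_0$, and $X, Y > 0$ on $[t_0, t^*)$. The goal is to rule out $t^* \le t_1$ (resp.\ $t^*$ finite).

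Assuming such a finite $t^*$, continuity forces at least one of $X(t^*), Y(t^*)$ to vanish; say $X(t^*) = 0$ (the other case is entirely symmetric, using the $y$-equation and the positivity of $c$ and $d$). Then $x(t^*) = u(t^*) > 0$, and the hypothesis that $x$ is smooth where it is nonzero forces $x$ to be smooth on some two-sided neighborhood $(t^* - \delta, t^* + \delta)$. On this neighborhood, subtracting the $x$-inequality from the $u$-equation yields
\[ X' \ge aX + bY. \]
Since $X, Y \ge 0$ on $(t^* - \delta, t^*]$ and $a, b$ are positive, I obtain $X' \ge 0$ there, so $X$ is non-decreasing on this interval. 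In particular, for any $t \in (t^* - \delta, t^*)$ one gets $X(t) \le X(t^*) = 0$, directly contradicting $X(t) > 0$ on $[t_0, t^*)$.

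The main obstacle I anticipate is the regularity book-keeping around points where $x$ or $y$ could vanish: the differential inequalities are only assumed where those functions are smooth, i.e.\ positive. The key observation resolving this is that at the putative first failure time $t^*$ the equality $X(t^*) = 0$ combined with $u > 0$ automatically delivers $x(t^*) > 0$, so $x$ is smooth in a full two-sided neighborhood of $t^*$ and the comparison inequality $X' \ge aX + bY$ is in force exactly where it is needed. Once this is settled, the monotonicity argument above is immediate and no Gronwall estimate is required.
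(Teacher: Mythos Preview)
Your proof is correct and follows essentially the same line as the paper's: both argue by contradiction at the first crossing time $t^*$ (the paper calls it $s$), subtract to obtain $(u-x)' \ge a(u-x) + b(v-y) \ge 0$ from the positivity of the coefficients and of $u-x,\,v-y$ on $[t_0,t^*)$, and conclude that $u-x$ cannot have dropped to zero. Your regularity bookkeeping is in fact slightly sharper than the paper's, since you note that only $x$ (not both $x$ and $y$) must be positive near $t^*$ to invoke the differential inequality for $X$, whereas the paper asks for both.
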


\begin{proof}
The main observation we need is that when $x$ and $y$ are nonzero,
\begin{align*}
(u-x)' &\geq a( u - x) + b( v - y ), \; \; \mbox{and } \\ 
(v-y)' &\geq c( u - x) + d( v - y).
\end{align*}
We know initially $u(t_0) > x(t_0)$ and $v(t_0) > y(t_0)$.  If there is a point where $x$ crosses $u$ or where $y$ crosses $v$ then there must be a first time where this occurs after $t_0$.  This is to say that we can find an $s > t_0$ where $u > x$ and $v > y$ on $[t_0, s)$ but either $u(s) = x(s)$ or $v(s) = y(s)$ or both.  Note that we may assume that $x$ and $y$ are positive on $[t_0,s]$, otherwise we shrink to a neighbourhood of the form $[t_0^*, s)$ where this is the case.

In any of these cases, by the observation above and since $a, b, c, d$ are all positive we have
\[ (u-x)'  \geq 0 \; \; \mbox{and } \; \; (v-y)' \geq 0, \]
on the interval $[t_0, s]$.  This now easily implies that
\[ u(t) - x(t) \geq u(t_0) - x(t_0) > 0, \mbox{and} \]
\[ v(t) - y(t) \geq v(t_0) - y(t_0) > 0. \]
for all $t \in [t_0, s]$, which contradicts the choice of $s$ as the time where either $u(s) = x(s)$ or $v(s) = y(s)$.
\end{proof}

Note there is an obvious generalization of Theorem \ref{thm:ODEcomp} to higher dimensional systems.  Also the sign of the diagonal elements is unrestricted if one writes the systems using an integrating factor.
\section{An Example} \label{section:example}

Recall that a generic smoothly conformally compact AH metric satisfies decay condition \eqref{AH1} and 
\begin{equation} \label{AH2e} \tag{AH2}
|\nabla_g \Rm|_g = O(\rho). 
\end{equation}
In this section we provide an example of a metric satisfying \eqref{AH1}, \eqref{AH2e} on an end diffeomorphic to $\bT^n \times (0,1)$ that does not have a Lipschitz conformal compactification when one uses a geodesic defining function (i.e. one for which $|d\rho|^2_{\gbar} \equiv 1$) to compactify the metric.

We begin by defining a useful function.  Let $\eta: \bR \rightarrow [-1,1]$ be a function such that $\eta|_{[1, \infty)} = 1$, $\eta|_{[-\infty, -1)} = -1$, $\eta|_{[-1/2,1/2]} = x$ such that $\eta$ is smooth, nondecreasing and $\eta' \leq C_1$, $|\eta''| < C_2$, $|\eta'''| < C_3$.  On the open set $\bO = \{ (y, \rho), 0 < \rho < 1, y \in \bR \}$ we consider the function
\[ f(y, \rho) = 2 \int^{1}_{\rho} \eta\left(\frac{ \sin y}{s}\right) ds. \]
Since $\eta$ is bounded, $f = O(1)$.  Computing first derivatives we find
\[ \partial_{\rho} f(y, \rho) = -2 \eta\left(\frac{\sin y}{\rho}\right) = O(1). \]
\[ \partial_{y} f(y, \rho) = 2 \int_{\rho}^{1} \frac{\cos y}{s} \eta'\left(\frac{\sin y}{s}\right) ds = O( \log \rho ). \]
We now compute second derivatives.  Note that when $|\sin y| > \rho$, $\eta'(\sin y/ \rho) = 0$, and when $|\sin y| \leq \rho$, $\eta'(\sin y/ \rho) \leq C_1$.  This reasoning lets us estimate expressions of the form $\eta'( \sin y / \rho) (\sin y / \rho)$ in what follows and similarly for higher derivatives.
\[ \partial_{\rho} \partial_{\rho} f(y, \rho) = 2 \eta'\left(\frac{\sin y}{\rho}\right) \frac{\sin y}{\rho^2} = O(\rho^{-1}). \]
\[ \partial_{y} \partial_{\rho} f(y, \rho) = \partial_{\rho} \partial_{y} f(y, \rho) = -2 \frac{\cos y}{\rho} \eta'\left(\frac{\sin y}{\rho}\right) ds = O( \rho^{-1}). \]
\[ \partial_{y} \partial_{y} f(y, \rho) = 2 \int_{\rho}^{1} \frac{\cos^2 y}{s^2} \eta''\left(\frac{\sin y}{s}\right) - \frac{\sin y}{s^2}  \eta'\left(\frac{\sin y}{s}\right)  ds = O( \rho^{-1} ). \]
A similar computation shows all of the third derivatives of $f$ satisfy $\partial^3 f = O(\rho^{-2})$.

We now construct a metric on $\bU = \{ (y^{\alpha}, \rho), 0 < \rho < 1, y^{\alpha} \in \bR \}$.  Fix a tangential coordinate $y^{\alpha_0}$ and label it as $y$.  Consider the metric
\[ \gbar = d \rho^2 + e^{f(y, \rho) } \delta_{\alpha \beta} dy^{\alpha} dy^{\beta}.\]
We regard this as a metric on the product of a torus with an interval $\bT^n \times [0,1)$.  This metric does not satisfy a uniform Lipschitz estimate down to $\rho = 0$ as 
\begin{equation} \label{eqn:gy}
\partial_y \gbar_{\alpha \beta} = e^{f(y, \rho)} \cdot \partial_y f \cdot \delta_{\alpha \beta} = O(\log \rho).
\end{equation}
Note however that
\begin{equation} \label{eqn:grho}
\partial_{\rho} \gbar_{\alpha \beta} = e^{f(y, \rho)} \cdot \partial_{\rho} f  \cdot \delta_{\alpha \beta} = O(1),
\end{equation}
and where these derivatives appear in the expressions for curvature plays an important role in what follows.
We now study the asymptotic curvature properties of the blow-up metric
\[ g := \rho^{-2} \gbar. \]
This is most easily done using the transformation formula for curvature under a conformal change of metric.  From the expressions for the metrics and bounds on $f$, we obtain estimates for the derivatives of both metrics.  We can translate this into `worst case estimates' for the Christoffel symbols (we suppress indices to indicate any choice of indices is valid)
\begin{equation} \label{eqn:wc} 
\left\{ \begin{aligned}
 \Gamma &= O(\rho^{-1}), \\
 \Gambar &= O(\log \rho), \partial \Gambar = O(\rho^{-1}), \partial^2 \Gambar = O(\rho^{-2}).
\end{aligned}
\right.
\end{equation}
The transformation formula for the full Riemannian curvature tensor under a conformal change of metric using the fact that $|d\rho|^2_{\gbar} \equiv 1$ may be written as
\begin{equation*}
 (R + \cK)_{ijkl} = \rho^{-3} ( \overline{\nabla}^2 \rho \owedge \gbar)_{ijkl} + \rho^{-2} \overline{R}_{ijkl},
\end{equation*}
where $\owedge$ is the Kulkarni-Nomizu product, $\overline{\nabla}$ is the $\gbar$-covariant derivative and $\cK$ is the constant curvature tensor.  This formula is valid for $\rho > 0$.  We now estimate.  Estimates \eqref{eqn:wc} already imply that $\overline{R}_{ijkl} = O(\rho^{-1})$.  We must argue that no log terms occur in $A := \overline{\nabla}^2 \rho \owedge \gbar$.  Observe that 
\[ (\overline{\nabla}^2 \rho \owedge \gbar)_{ijkl} = \rhob_{il} \gbar_{jk} -  \rhob_{ik} \gbar_{jl} + \rhob_{jk} \gbar_{il} - \rhob_{jl} \gbar_{ik}. \]
If all of the indices are distinct then $A = 0$ since the metric is diagonal.  So if $A \neq 0$ for a choice of indices then a pair of indices is repeated.  We now have two subcases whether or not the remaining indices are identical or different.  In case they are identical then by the symmetries of curvature tensors, up to sign we have
\begin{align*}
(\overline{\nabla}^2 \rho \owedge \gbar)_{ijji} &= \rhob_{ii} \gbar_{jj} -  \rhob_{ij} \gbar_{ji} + \rhob_{jj} \gbar_{ii} - \rhob_{ji} \gbar_{ij} \\
&= \rhob_{ii} \gbar_{jj} + \rhob_{jj} \gbar_{ii}.
\end{align*}
The formula for second covariant derivatives of functions applied to the coordinate function $\rho$ implies
\[ \rhob_{ii} = \partial_i \partial_i \rho - \Gambar^s_{ii} \partial_s \rho = - \Gambar^0_{ii} = \frac{1}{2} \partial_{\rho} \gbar_{ii} = O(1), \]
using \eqref{eqn:grho}.
In the second case, by the symmetries of curvature we have up to sign
\begin{align*}
(\overline{\nabla}^2 \rho \owedge \gbar)_{ijjl} &= \rhob_{il} \gbar_{jj} -  \rhob_{ij} \gbar_{jl} + \rhob_{jj} \gbar_{il} - \rhob_{jl} \gbar_{ij} \\
&= \rhob_{il} \gbar_{jj}.
\end{align*}
Once again we compute
\[ \rhob_{il} = \partial_i \partial_l \rho - \Gambar^s_{il} \partial_s \rho = - \Gambar^0_{il} = 0,\]
since the metric is diagonal and $\gbar_{00}$ is constant.  Therefore there are no contributions from tensors of type $A$ to the curvature tensor in the second case.
In light of these estimates
\begin{equation} \label{eqn:ah1est}
(R + \cK)_{ijkl} = O(\rho^{-3}),
\end{equation}
which implies \eqref{AH1}.

We now estimate the first covariant derivative of curvature.  The formula for components of covariant derivatives may be represented
\[ \nabla R = \nabla (R + \cK) = \partial (R+\cK) - \Gamma * (R+\cK) \]
where $*$ indicates contractions whose precise formula are unimportant here.

Estimates \eqref{eqn:wc} and \eqref{eqn:ah1est} show that $\Gamma * (R+\cK) = O(\rho^{-4})$.  For the derivative terms, since the metric depends only on $\rho$ and $y$, we need only compute those derivatives.  For the $\rho$ derivatives we have
\begin{align*}
 \partial_{\rho} (R + \cK)_{ijkl} &= -3 \rho^{-4} ( \overline{\nabla}^2 \rho \owedge \gbar)_{ijkl} + \rho^{-3}  \partial_{\rho}( \overline{\nabla}^2 \rho \owedge \gbar)_{ijkl} \\ &-2 \rho^{-3} \overline{R}_{ijkl} + \rho^{-2} \partial_{\rho} \overline{R}_{ijkl}, 
\end{align*}
By the estimates already established, each term is $O(\rho^{-4})$.

For the $y$-derivatives we find 
\begin{align*}
 \partial_{y} (R + \cK)_{ijkl} &= \rho^{-3} \partial_{y}( \overline{\nabla}^2 \rho \owedge \gbar)_{ijkl} + \rho^{-2} \partial_{y} \overline{R}_{ijkl}, 
\end{align*}
Already estimates \eqref{eqn:wc} are enough to handle the last term and when the $y$-derivative acts on the $\overline{\nabla}^2 \rho$ terms.  When the $y$-derivative acts on $\gbar$, by \eqref{eqn:gy} we obtain terms of order $O(\rho^{-3} \log \rho)$, which clearly are order $O(\rho^{-4})$.  Consequently
\[ \nabla_m R_{ijkl} = O(\rho^{-4}),\]
which implies \eqref{AH2e}.

Finally as $\gbar$ is a continuous metric on $\overline{\bU}$ it follows that $g$ is complete.  Set $r = -\log \rho$, and note from estimate \eqref{eqn:grho} that the second $g$-covariant derivatives of $r$ satisfy
\[ r_{\alpha \beta} = \rho^{-2} \gbar_{\alpha \beta} + O(\rho^{-1}). \]
From this estimate it follows that there exists $\rho_0$ sufficiently small where $r$ is strictly convex on the collar neighbourhood $r \geq - \log \rho_0$.  Further the hypersurface $r = -\log \rho_0$ is compact and strictly convex with respect to the outward unit normal.  From these facts we may deduce that $r \leq -\log \rho_0$ is totally convex and that the outward normal exponential map off $r = -\log \rho_0$ is a diffeomorphism onto its image.

\bibliographystyle{amsalpha}
\bibliography{lipah}
\end{document}